\documentclass[12pt,a4paper, reqno]{amsart}
\usepackage{amsfonts,amsmath,amssymb}
\usepackage{hyperref}
\usepackage{latexsym,fullpage,amsfonts,amssymb,amsmath,amscd,graphics,epic,enumerate}
\usepackage[all]{xy}
\usepackage{amssymb,amsthm,amsxtra,comment}
\usepackage{color}
\usepackage[dvipsnames]{xcolor}
\usepackage{amscd}
\usepackage{amsthm}
\usepackage{amsfonts,cancel}
\usepackage{amssymb}
\usepackage{url}
\usepackage{bbm}
\usepackage{wasysym}
\usepackage{MnSymbol}
\usepackage{kbordermatrix}
\usepackage{tikz}

\usepgflibrary{shapes.geometric}

\tikzstyle{V}=[draw, fill =black, circle, inner sep=0pt, minimum size=1.5pt]
\tikzstyle{C}=[draw, fill =white, circle, inner sep=0pt, minimum size=1.5pt]
\tikzstyle{over}=[draw=white,double=black,line width=2pt, double distance=.5pt]

\theoremstyle{plain}% default
\newtheorem*{theorem*}{Theorem}
\newtheorem*{remark*}{Remark}
\newtheorem*{example*}{Example}
\newtheorem{lemma}{Lemma}[subsection]
\newtheorem{proposition}[lemma]{Proposition}
\newtheorem{corollary}[lemma]{Corollary}
\newtheorem{theorem}[lemma]{Theorem}

\newtheorem*{conjecture*}{Conjecture}

\theoremstyle{definition}

\theoremstyle{remark}
\newtheorem{remark}[lemma]{Remark}

\oddsidemargin=0cm
\evensidemargin=0cm

\baselineskip 20pt \textwidth 16cm \sloppy
 \newcommand{\op}{\operatorname}

\newcommand{\Hom}{\operatorname{Hom}}

\newcommand{\ch}{\operatorname{ch}}
\newcommand{\sch}{\operatorname{sch}}

\newcommand{\Ind}{\operatorname{Ind}}
\newcommand{\Coind}{\operatorname{Coind}}

\newcommand{\sdim}{\operatorname{sdim}}

\newcommand{\Span}{\operatorname{Span}}

\renewcommand{\Im}{\operatorname{Im}}

\newcommand{\Gr}{\operatorname{Gr}}
\newcommand{\sgn}{\operatorname{sgn}}

\renewcommand{\gg}{\mathfrak{g}}

\renewcommand{\dim}{\mathrm{dim}}

\newcommand{\odd}{\mathrm{odd}}

\newcommand{\finite}{\mathrm{finite}}
\newcommand{\Modulo}[1]{\ (\mathrm{mod}\ #1)}
\newcommand{\End}{\mathrm{End}}

\newif\ifpaper
\papertrue

 \def\<{\langle}
  \def\>{\rangle}

\DeclareMathOperator{\tr}{tr}

\def\quotient#1#2{%
    \raise1ex\hbox{$#1$}\Big/\lower1ex\hbox{$#2$}%
}

\allowdisplaybreaks 
\begin{document}

\title{Grothendieck rings of periplectic Lie superalgebras}

\author{Mee Seong Im 
\and Shifra Reif
\and Vera Serganova} 
\address{Department of Mathematical Sciences, United States Military Academy, West Point, NY 10996 USA}
\email{meeseongim@gmail.com}
\address{Department of Mathematics, Bar-Ilan University, Ramat Gan, Israel}
\email{shifra.reif@biu.ac.il} 
\address{Department of Mathematics, University of California at Berkeley, Berkeley, CA 94720 USA}
\email{serganov@math.berkeley.edu} 
\date{\today}

\begin{abstract}  
We describe explicitly the Grothendieck rings of finite-dimensional representations of the periplectic Lie superalgebras.  
In particular,  the Grothendieck ring of the Lie supergroup $P(n)$  is isomorphic to the ring of symmetric polynomials in $x_1^{\pm 1}, \ldots, x_n^{\pm 1}$  whose evaluation $x_1=x_2^{-1}=t$ is independent of $t$. 
\end{abstract}

\keywords{Duflo--Serganova functor, Grothendieck ring, periplectic Lie superalgebra, supercharacters, thin Kac modules, translation functors, parabolic induction}
\maketitle
\bibliographystyle{amsalpha}  
\setcounter{tocdepth}{3}
% \tableofcontents
\section{Introduction}\label{sec:intro}
The Grothendieck group is a fundamental invariant attached to an abelian category. It is defined to be the free abelian group on the objects of the category modulo the relation 
 $[B]=[A]+[C]$ for every exact sequence $0\rightarrow A \rightarrow B \rightarrow C\rightarrow 0$. 
If the category possesses tensor products of objects, then the Grothendieck group inherits a structure of a ring. A beautiful example is the Grothendieck ring $K[GL(n)]$ of the category of finite-dimensional representations of  the general linear group. It is isomorphic to the ring of symmetric Laurent polynomials
\[
K[GL(n)]\cong \mathbb Z\left[x_1^{\pm 1},\ldots, x_n^{\pm 1} \right]^{S_n}
\]
(see for example, \cite[Sec. 23.24]{MR1153249}).
Moreover, the famous Schur polynomials are images of irreducible representations under this isomorphism.

This description generalizes to all semisimple complex Lie algebras. In this case, the category admits complete reducibility and the characters of irreducible representations are given explicitly by the Weyl character formula.   The Grothendieck ring is then isomorphic to the ring $\mathbb{Z}[P]^W$ of  $W$-invariants in the integral group ring $\mathbb{Z}[P]$, where $P$ is the corresponding weight lattice and $W$ is the Weyl group. The isomorphism is given by the character map. 
%%%%%%%%%%%%%%%%%%%%%%%%%%%%%%%%%%%%%%%%%%%%%%%%
%%%%%%%  ALTERNATIVE BEGINNING FOR INTRO %%%%%%%
% 
% One of the foundations in representation theory is the classification of finite-dimensional representations over semisimple complex Lie algebras and their Lie groups. 
% These representations are completely reducible and the irreducible representations admit the Weyl character formula. 
% Moreover, the Grothendieck ring is isomorphic to the ring $\mathbb{Z}[P]^W$ of  $W$-invariants in the integral group ring $\mathbb{Z}[P]$, where $P$ is the corresponding weight lattice and $W$ is the Weyl group. This isomorphism is given by the character map. In particular, it follows that for the general linear group, the Grothendieck ring $K[GL(n)]$ is isomorphic to the ring of symmetric Laurent polynomials
% \[
% K[GL(n)]\cong \mathbb Z\left[x_1^{\pm 1},\ldots, x_n^{\pm 1} \right]^{S_n}.
% \]

%%%%%%%%%%%%%%%%%%%%%%%%%%%%%%%%%%%%%%%%%%%%%%%%
%%%%%%%%%%%%%%%%%%%%%%%%%%%%%%%%%%%%%%%%%%%%%%%%

The analogous theory for Lie superalgebras is more difficult: the category of finite-dimensional representations is not semisimple and a general Weyl character formula is unknown. The class of basic classical Lie superalgebras is better understood, as it carries an invariant bilinear form.

 In 2007, A.N. Sergeev and A.P. Veselov  \cite{MR2776360} described the Grothendieck ring for basic classical Lie superalgebras.  Since modules over Lie superalgebras admit a parity shift functor $\Pi$ which does not change the action of the Lie superalgebra, it is natural to consider one of the two quotients of the ring, either by the relation $[M]=[\Pi M] $ or $[M]=-[\Pi M] $. We refer to these quotients as the ring of characters and the ring of supercharacters,  respectively.  
 
 The theorem of A.N. Sergeev and A.P. Veselov  states that the ring of supercharacters is equal to the subring of $\mathbb{Z}[P]^W$, admitting an extra condition which corresponds to the isotropic roots of the Lie superalgebra. This extra condition can be seen as invariance under an action of the Weyl groupoid. In particular, for the general linear Lie supergroup $GL(m|n)$,  the ring of supercharacters is isomorphic to the ring of supersymmetric Laurent polynomials, namely,
 \[
 \left\{ f\in\mathbb Z\left[x_1^{\pm 1},\ldots, x_m^{\pm 1},y_1^{\pm 1},\ldots, y_n^{\pm 1} \right]^{S_m\times S_n} : f|_{x_1=y_1=t} \mbox{ is independent of }t \right\}.
 \]

The periplectic Lie superalgebra $\mathfrak p(n)$ imposes further difficulties than basic classical Lie superalgebras due to the lack of an invariant bilinear form. It was only recently that its representations were understood and translation functors were computed in \cite{BDEHHILNSS} and \cite{BDEHHILNSS2}.

In this paper, we describe the ring of supercharacters of the periplectic Lie superalgebra. We show that it is isomorphic to the ring of supersymmetric functions with a suitable supersymmetry condition. In particular, for the periplectic Lie supergroup $P(n)$, we get the following theorem:
\begin{theorem}
\label{thm:reduced-GR-periplectic}
The ring of supercharacters of $P(n)$ is isomorphic to 
\[ 
J_n := 
 \{ f\in \mathbb{Z}[x_1^{\pm 1},\ldots, x_n^{\pm 1}]^{S_n}: f|_{x_1=x_2^{-1}=t} \mbox{ is independent of }t  \}.
\] 
\end{theorem}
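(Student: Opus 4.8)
The plan is to study the supercharacter homomorphism
\[
\sch\colon K^-[P(n)]\longrightarrow\mathbb{Z}[x_1^{\pm1},\ldots,x_n^{\pm1}],
\]
where $K^-[P(n)]$ denotes the ring of supercharacters of $P(n)$, and to prove that $\sch$ is an injective ring homomorphism whose image is exactly $J_n$. That $\sch$ is a ring homomorphism is immediate from additivity on short exact sequences and $\sch(M\otimes N)=\sch M\cdot\sch N$. Its image lies in the $S_n$-invariant Laurent polynomials: the weights of a finite-dimensional $P(n)$-module lie in $\mathbb{Z}^n$, and restriction to $\pp(n)_{\bar0}=\fgl(n)$ makes every supercharacter $S_n$-invariant. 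Moreover $\sch$ is injective, since the classes $[L(\lambda)]$ of the finite-dimensional simple modules, indexed by dominant integral weights $\lambda$, form a $\mathbb{Z}$-basis of $K^-[P(n)]$, while $\sch L(\lambda)=x^{\lambda}+\sum_{\mu<\lambda}c_{\lambda\mu}\,x^{\mu}$ is triangular in the dominance order, so a maximal $\lambda$ occurring in a putative relation yields a contradiction. It therefore remains to prove the two inclusions $\operatorname{image}(\sch)\subseteq J_n$ and $J_n\subseteq\operatorname{image}(\sch)$.

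For $\operatorname{image}(\sch)\subseteq J_n$ only the supersymmetry condition needs checking, and the tool is the Duflo--Serganova functor $DS_x$ for an odd root vector $x$ spanning the root space $\pp(n)_{\varepsilon_1+\varepsilon_2}$. Since $[\pp(n)_{\varepsilon_1+\varepsilon_2},\pp(n)_{\varepsilon_1+\varepsilon_2}]=0$ one has $x^2=0$, so $DS_x$ is defined; it carries finite-dimensional $\pp(n)$-modules to finite-dimensional $\pp(n-2)$-modules, and, by the compatibility of the Duflo--Serganova functor with supercharacters (the periplectic analogue of the mechanism of Sergeev and Veselov, which has to be established first), it induces on supercharacters the operation of substituting $x_1=t$, $x_2=t^{-1}$ and reading off a supercharacter in the residual variables $x_3,\ldots,x_n$. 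Writing $\rho$ for the ring map that sends $f$ to $f|_{x_1=x_2^{-1}=t}$, this says that $\rho(\sch M)=\sch(DS_x M)$ for every $M$; since the right-hand side never involves $t$, every $\sch M$ satisfies the condition defining $J_n$.

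The main work is the surjectivity $J_n\subseteq\operatorname{image}(\sch)$, which I would prove by induction on $n$ (the cases $n\le1$, where the supersymmetry condition is vacuous, being immediate) via the commutative square
\[
\begin{CD}
K^-[P(n)] @>{DS_x}>> K^-[P(n-2)] \\
@V{\sch}VV @VV{\sch}V \\
J_n @>{\rho}>> J_{n-2}.
\end{CD}
\]
One first records three elementary facts about the bottom row. The map $\rho$ carries $J_n$ into $J_{n-2}$. Its kernel on $\mathbb{Z}[x_1^{\pm1},\ldots,x_n^{\pm1}]^{S_n}$ equals $D_n\cdot\mathbb{Z}[x_1^{\pm1},\ldots,x_n^{\pm1}]^{S_n}$, where $D_n=\prod_{1\le i<j\le n}(1-x_i^{-1}x_j^{-1})$, because an $S_n$-invariant Laurent polynomial vanishing on $\{x_1x_2=1\}$ is divisible by $\prod_{i<j}(x_ix_j-1)=e_n^{n-1}D_n$ with $e_n=x_1\cdots x_n$ a unit. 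And $D_n\cdot\mathbb{Z}[x_1^{\pm1},\ldots,x_n^{\pm1}]^{S_n}$ is contained in $\operatorname{image}(\sch)$: the thin Kac modules $K(\lambda)$ have supercharacters $\sch K(\lambda)=s_\lambda(x)\cdot D_n$, the factor $D_n$ being the supercharacter of $\Lambda^{\bullet}\pp(n)_{-1}$, and the Schur polynomials $s_\lambda$ over dominant $\lambda$ span $\mathbb{Z}[x_1^{\pm1},\ldots,x_n^{\pm1}]^{S_n}$. Granting that $DS_x$ is surjective on Grothendieck groups, a diagram chase completes the induction: given $f\in J_n$, use the inductive hypothesis to write $\rho(f)=\sch N$, lift $N$ to $M$ with $DS_x M=N$, and observe that $f-\sch M\in\Ker(\rho)\cap J_n=D_n\cdot\mathbb{Z}[x_1^{\pm1},\ldots,x_n^{\pm1}]^{S_n}\subseteq\operatorname{image}(\sch)$, whence $f\in\operatorname{image}(\sch)$.

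The step I expect to be the real obstacle is the surjectivity of $DS_x$ on Grothendieck groups: realizing every finite-dimensional simple $\pp(n-2)$-module, up to $\mathbb{Z}$-linear combination, as $DS_x$ of a finite-dimensional $\pp(n)$-module. This is where parabolic induction enters --- one takes a parabolic subalgebra $\mathfrak q\subset\pp(n)$ whose Levi contains $\pp(n-2)$, inflates a $\pp(n-2)$-module to $\mathfrak q$, induces it up, and checks, from the known behaviour of $DS_x$ on parabolically induced modules, that $DS_x\circ\Ind_{\mathfrak q}^{\pp(n)}$ is the identity on $K^-[P(n-2)]$ modulo classes that vanish in the Grothendieck group. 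Translation functors would be used both here and in tracking $DS_x$ on simple modules and through atypical blocks, and the detailed computations of \cite{BDEHHILNSS,BDEHHILNSS2} should supply exactly these inputs. A subsidiary delicate point, needed already for the upper bound and because the category of $\pp(n)$-modules is far from semisimple, is establishing the precise compatibility $\rho(\sch M)=\sch(DS_x M)$ used throughout.
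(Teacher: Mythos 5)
Your overall architecture coincides with the paper's: the easy inclusion via the supersymmetry condition, the realization of the Duflo--Serganova map on supercharacters as the evaluation $\rho(f)=f|_{x_1=x_2^{-1}=t}$, the identification of $\Ker\rho$ on symmetric Laurent polynomials with $\mathcal R_{-1}\cdot\mathbb Z[x_1^{\pm1},\ldots,x_n^{\pm1}]^{S_n}$, i.e.\ with the span of thin Kac supercharacters (Proposition~\ref{lemma:kernel-DS-Pn}), and the inductive diagram chase in Section~\ref{subsection:main-result-proof}. (Your route to the upper bound, deducing $t$-independence from the identity $\rho(\sch M)=\sch(DS_xM)$ rather than from Sergeev--Veselov's rank-one restriction, is legitimate once that identity is established; the paper imports it from Hoyt--Reif.) The genuine gap is the step you yourself single out and then leave as a black box: the surjectivity of $ds_n\colon J(P(n))\to J(P(n-2))$, which is Theorem~\ref{thm:ds-map-surj} and occupies the technical core of the paper. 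Your sketch for it --- that $DS_x\circ\Ind_{\mathfrak q}^{\pp(n)}$ should be ``the identity on $K^-[P(n-2)]$ modulo classes that vanish in the Grothendieck group'' --- is neither proved nor plausible as stated: there is no such section-type statement available for $\pp(n)$, and the paper's own computation shows the actual behaviour is different. What the paper does is compute the Euler characteristic $\mathcal E(\lambda)$ of a line bundle on $G/Q$ for the parabolic with Levi $\pp(2k)+\fgl(n-2k)$ and the very particular weight $a(\varepsilon_1+\cdots+\varepsilon_{2k})$, showing via denominator identities that $ds_n^{(k)}(\mathcal E(\lambda))=\sch\nabla(0)$ (Proposition~\ref{nabla(0) is in the image}), and then propagates from $\nabla(0)$ to all $\nabla(\mu)$ using the explicit action of the translation functors $\Theta_k'$ on thin Kac modules together with $DS_n\circ\Theta_k'\cong\Theta_k'\circ DS_n$ (Proposition~\ref{prop:all-thin-Kac-image}, Lemma~\ref{commute}).

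Even granting all of that, there is a second missing ingredient which your chase silently skips: having every $[\nabla(\mu)]$ in the image of $ds_n$ is strictly weaker than surjectivity, because by the kernel computation $\Span\{\sch\nabla(\mu):\mu\in\Lambda_{n-2}\}=\Ker ds_{n-2}$ is a proper subgroup of $J(P(n-2))$ for $n-2\ge 2$. The paper bridges this with Proposition~\ref{prop:surjectivity}: an induction over the filtration $0=\Ker ds_n^{(0)}\subseteq\Ker ds_n^{(1)}\subseteq\cdots\subseteq\Ker ds_n^{(\lfloor n/2\rfloor)}=J(P(n))$ and its associated graded, showing that the induced maps $\overline{ds}_n^{(k)}$ are bijections, which is what finally upgrades ``thin Kac classes are hit'' to surjectivity of $ds_n$. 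Nothing in your proposal plays this role, and your main induction (``lift $N$ to $M$ with $DS_xM=N$'') cannot run without it. So while the skeleton of your argument matches the paper, the two decisive steps --- constructing explicit preimages via the parabolic Euler characteristic plus translation functors, and the filtration bootstrap from thin Kac classes to full surjectivity --- are absent.
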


 The inclusion from left to right for Theorem~\ref{thm:reduced-GR-periplectic} 
 is obtained by restriction to rank-one subalgebras as done in \cite[Prop. 4.3]{MR2776360}.
The other inclusion is much more involved. A key tool is the ring homomorphism $ds_n:J(P(n))\rightarrow J(P(n-2))$ induced from the Duflo--Serganova functor. We use the realization of  $ds_n$ as the evaluation map $f\mapsto f|_{x_n=x_{n-1}^{-1}=t}$, proven in \cite{hoyt2016grothendieck} as well as the description of its kernel. The main step is to prove that $ds_n$ is surjective in order to apply an inductive argument. We construct preimages of $ds_n$ using Euler characteristics of  parabolic inductions given in   \cite{MR2734963} and translation functors given in \cite{BDEHHILNSS}.
 
The description of the ring of supercharacters of the Lie supergroup $SP(n)$ and the Lie superalgebras $\mathfrak p(n)$ and $\mathfrak {sp}(n)$ are deduced from the one of $P(n)$. We also express the character ring of $\mathfrak p(n)$ as the ring of invariant functions under a Weyl groupoid corresponding to the root system of $\mathfrak p (n)$.
 \ifpaper
\subsection*{Structure of the paper} 
In Section~\ref{section:periplectic-salg}, we summarize the representation theory of the periplectic Lie superalgebra $\mathfrak{p}(n)$. In Section~\ref{section:DS-functor}, we define the Duflo--Serganova functor for $P(n)$, 
  construct the corresponding homomorphism between supercharacter rings
  and compute its kernel. The kernel is explicitly described in terms of the supercharacters of thin Kac modules (see Proposition~\ref{lemma:kernel-DS-Pn}).
In Section~\ref{section:main-thm-proof}, we prove the surjectivity of the Duflo--Serganova homomorphism for $P(n)$, 
and in Section~\ref{subsection:main-result-proof}, we prove Theorem~\ref{thm:reduced-GR-periplectic}. 
We then describe the Grothendieck ring of the periplectic Lie superalgebra in Section~\ref{subsubsection:extend-results-to-pn}, and the special periplectic Lie superalgebra in  Section~\ref{subsubsection:extend-results-to-spn}. We end this manuscript by describing the super Weyl groupoid for $\mathfrak{p}(n)$ in  Section~\ref{subsection:super-Weyl-groupoid}.

\subsection*{Acknowledgments}
We thank Bar-Ilan university at Ramat Gan, Israel for hosting M.S.I. and providing excellent collaboration conditions, and  Malka Schaps for interesting discussions.
This project is partially supported by ISF Grant No. $1221/17$, NSF grant $1701532$, and United States Military Academy's Faculty Research Fund.

\section{The periplectic Lie superalgebra and its representations}
\label{section:periplectic-salg}

\subsection{Lie superalgebras}

Given a $\mathbb{Z}_2$-graded vector superspace $V = V_{\bar 0}\oplus V_{\bar 1}$, 
the parity of a homogeneous (even) vector $v\in V_{\bar 0}$ is defined as $\bar v=\bar 0\in \mathbb{Z}_2 =\{ \bar 0 , \bar 1 \}$  while the parity of an odd vector $v\in V_{\bar 1}$ is defined as $\bar v = \bar 1$. If the parity of a vector $v$ is $\bar 0$ or $\bar 1$, we say that $v$ has degree $0$ or $1$, respectively.  
We always assume that $v$ is homogeneous whenever the notation $\bar v$ appears in expressions. 
By $\Pi$ we denote the switch of parity functor.

The Lie superalgebra $\mathfrak{g}=\mathfrak{gl}(n|n)$ is defined to be the endomorphism algebra $\End(V_{\bar{0}}\oplus V_{\bar 1})$, where $\dim V_{\bar 0}=\dim V_{\bar 1}=n$. Then $\mathfrak{g}=\mathfrak{g}_{\bar 0}\oplus \mathfrak{g}_{\bar 1}$, where 
\[ 
\mathfrak{g}_{\bar 0} = \End(V_{\bar 0}) \oplus \End(V_{\bar 1}) 
\qquad 
\mbox{ and } 
\qquad 
\mathfrak{g}_{\bar 1} = \Hom(V_{\bar 0}, V_{\bar 1}) \oplus \Hom(V_{\bar 1},V_{\bar 0}). 
\] 
Let $[x,y] = xy-(-1)^{\bar x \bar y}yx$, where $x$ and $y$ are homogeneous elements of $\mathfrak{g}$, and extend $[\:\:\:,\:\:\:]$ linearly to all of $\mathfrak{g}$. 
By fixing a basis of $V_{\bar 0}$ and $V_{\bar 1}$, the superalgebra $\mathfrak{g}$ can be realized as the set of $2n\times 2n$ matrices, where  
\[ 
\mathfrak{g}_{\bar 0} = 
\left\{  
\begin{pmatrix}
A & 0 \\ 
0 & D \\ 
\end{pmatrix} : 
A, D\in M_{n,n}
\right\}
\mbox{ and }
\mathfrak{g}_{\bar 1} = 
\left\{  
\begin{pmatrix} 
0 & B \\ 
C & 0 \\ 
\end{pmatrix} : 
B,C \in M_{n,n}
\right\}, 
\] 
and  $M_{n,n}$ are $n\times n$ complex matrices. 
Recall that the supertrace is defined by 
\[ 
\mathfrak{str}\begin{pmatrix} A&B\\C&D \end{pmatrix} = \tr(A) - \tr(D). 
\] 

\subsubsection{Periplectic Lie superalgebra}

Let $V$ be an $(n|n)$-dimensional vector superspace equipped with a nondegenerate odd symmetric form
\begin{equation}
\label{eqn:bilinear-form-Pn}
\beta:V\otimes V\to\mathbb C,\quad \beta(v,w)=\beta(w,v), \quad\text{and}\quad \beta(v,w)=0 \quad \text{if} \quad  \bar {v}=\bar{w}.
\end{equation}
Then $\op{End}_{\mathbb C}(V)$ inherits the structure of a vector superspace from $V$. Let $\mathfrak{p}(n)$ be the Lie superalgebra of all $X\in\operatorname{End}_{\mathbb C}(V)$ preserving $\beta$, i.e., $\beta$ satisfies the condition  $$\beta(Xv,w)+(-1)^{\bar{X}\bar{v}}\beta(v,Xw)=0.$$

With respect to a fixed bases for $V$, the matrix of $X\in \mathfrak{p}(n)$ has the form 
$\left(\begin{smallmatrix}
A&B\\
C&-A^t
\end{smallmatrix}
\right)$, 
where $A,B,C$ are $n\times n$ matrices such that $B$ is symmetric and $C$ is antisymmetric.

For the remainder of this manuscript, we will write $\mathfrak{g}:= \mathfrak{p}(n)$.
  Note that $\mathfrak{str}:\mathfrak{g}\rightarrow \mathbb{C}$ is a one-dimensional representation of $\mathfrak{g}$.
  We will also use the $\mathbb Z$-grading $\mathfrak{g}=\mathfrak{g}_{-1}\oplus\mathfrak{g}_0\oplus\mathfrak{g}_1$ where
  $\mathfrak{g}_0=\mathfrak{g}_{\bar 0}\simeq\mathfrak{gl}(n)$, $\mathfrak{g}_{\pm 1} $ is the annihilator of $V_{\bar 0}$ (respectively, $V_{\bar 1}$).
  By $G$ we denote the algebraic supergroup $P(n)$.

\subsection{Root systems}
\label{subsection:root-systems}
For the periplectic Lie superalgebra $\mathfrak{g}$, 
fix the standard Cartan subalgebra $\mathfrak{h}$ of diagonal matrices in $\mathfrak{g}_0$ with its standard dual basis $\{ \varepsilon_1,\ldots, \varepsilon_n \}$.  
So we have a root space decomposition 
$\mathfrak{g} 
= \mathfrak{h}\oplus \left( \bigoplus_{\alpha\in \Delta} \mathfrak{g}_{\alpha}\right)$, 
where 
$\Delta = \Delta(\mathfrak{g}_{-1})\cup \Delta(\mathfrak{g}_0)\cup \Delta(\mathfrak{g}_1)$, and 
\begin{align*} 
\Delta(\mathfrak{g}_0) &= 
\{ 
\varepsilon_i - \varepsilon_j: 1\leq i \not= j \leq n
\},   \\
\Delta(\mathfrak{g}_1) = 
\{ 
\varepsilon_i + \varepsilon_j : 1\leq i\le j \leq n& 
\}, \quad 
\mbox{ and }
\quad 
\Delta(\mathfrak{g}_{-1}) = 
\{ 
-(\varepsilon_i + \varepsilon_j) : 1\leq i< j\leq n 
\}.  
\end{align*}

The set of simple roots is chosen to be 
\[
\Pi= \{ -2\varepsilon_1, \varepsilon_1-\varepsilon_2, \ldots, \varepsilon_{n-1}-\varepsilon_n \}. 
\]  
This implies that 
$\Delta^+(\mathfrak{g}_{0}) = \{ \varepsilon_i-\varepsilon_j:1\leq  i < j\leq n \}$. 
Our Borel subalgebra is then $\mathfrak b_0 \oplus \mathfrak g_{-1}$, where $\mathfrak b_0=\bigoplus_{\alpha\in\Delta^+(\mathfrak g_0)}\mathfrak g_\alpha$ and $\mathfrak g_{-1}=\bigoplus_{\alpha\in \Delta(\mathfrak g_{-1})}\mathfrak g_\alpha$.

Let 
\[ 
\mathcal{R}_{0}^{} = \prod_{\alpha\in\Delta^+(\mathfrak{g}_{0})} (1-e^{-\alpha}), 
\quad 
\mbox{ and }
\quad 
\mathcal{R}_{-1}^{} = \prod_{\alpha\in\Delta(\mathfrak{g}_{-1})} (1-e^{-\alpha}). 
\] 
For $W=S_n$, the Weyl group of the even subalgebra of $\mathfrak p(n)$, $\mathcal{R}_{-1}^{}$ is $W_{}$-invariant and $e^{\rho}\mathcal{R}_{0}^{}$ is $W$-anti-invariant.

\subsection{Weight spaces}
\label{subsection:weight-spaces }
 Let $\mathcal C_n$ be the category of finite-dimensional representation of $\mathfrak {g}$ and $\mathcal F_n$ be the category of finite-dimensional
  representation of $G$. Both are abelian symmetric rigid tensor categories. The latter category is equivalent to the category of finite-dimensional $\mathfrak{g}$-modules, 
  integrable over the underlying algebraic group $G_0=GL(n)$, see \cite{S1}.

The Cartan subalgebra $\mathfrak{h}$ is abelian, 
so it acts locally-finitely on a finite-dimensional $\frak g$-module $M$. 
This yields a decomposition of $M$ as a direct sum of generalized weight spaces 
$M=\oplus_{\lambda\in\mathfrak{h}^*}M_{\lambda}$ where 
$M_{\lambda}=\{ v\in M: (h-\lambda(h))^{m} v \mbox{ for all }h\in \mathfrak{h}\}\not= \{0\}$  for some sufficiently large $m$.  If $M$ is a $G$-module, it is semisimple over $\mathfrak{g}_0$ and hence $\mathfrak{h}$ acts diagonally on $M$.

Suppose that $M=\bigoplus_{\mu \in \mathfrak h ^*}M_\mu$ is weight space decomposition of a $\mathfrak g$-module $M$. Define the character of $M$ as
\[
\ch(M) := \sum_{\mu\in\mathfrak h^*} \dim (M_{\mu})e^{\mu}, 
\] 
while the supercharacter is defined as 
\[ 
\sch(M) := \sum_{\mu\in\mathfrak h^*} \sdim (M_{\mu}) e^{\mu}. 
\]

Weights of modules in the abelian category $\mathcal{F}_n$ of finite-dimensional representations of the periplectic Lie supergroup $P(n)$ are denoted as 
\[
\lambda = (\lambda_1,\ldots, \lambda_n) = \sum_{1\leq i\leq n} \lambda_i \varepsilon_i, \qquad \lambda_i \in \mathbb{Z}. 
\] 
Define the parity of $\lambda$ as $p(\lambda)=\frac{1}{2}\sum_{1\leq i\leq n} \lambda_i\Modulo 2$  
if $\sum_{1\leq i\leq n} \lambda_i$ is even and $p(\lambda)=\frac{1}{2}(\sum_{1\leq i\leq n} \lambda_i+1)\Modulo 2$ if  
$\sum_{1\leq i\leq n} \lambda_i$ is odd. 
Note that the standard ordering of the weights for our choice of positive roots is $\varepsilon_i > \varepsilon_j$ when $i< j$ and $\varepsilon_i < 0$ for all $i$.

A weight $\lambda$ is dominant if and only if $\lambda_1 \geq \lambda_2 \geq \ldots \geq \lambda_n$. 
We will denote $\Lambda_n$ as the set of dominant integral weights. Simple objects in $\mathcal{F}_n$ (up to isomorphism and parity-switch) are parametrized by $\Lambda_n$. Denote by $L(\lambda)$ the simple module with highest weight $\lambda$ with respect to the Borel subalgebra $\mathfrak{b}_0\oplus \mathfrak{g}_{-1}$,  
where the parity is taken such that the parity of the highest weight vector is $p(\lambda)$.

\subsection{Thin Kac modules}  
\label{subsection:thin-Kac} 
Let $V(\lambda)$ be a simple $\mathfrak{g}_0$-module with highest weight $\lambda$ with respect to the fixed Borel $\mathfrak{b}_0$ of $\mathfrak{g}_0$. 
Given a dominant integral weight $\lambda$, the thin Kac module corresponding to $\lambda$ is   
\[
\nabla(\lambda) = \prod\!{}^{n(n-1)/2} \Ind_{\mathfrak{g}_0\oplus \mathfrak{g}_1}^{\mathfrak{g}} V(\lambda-\gamma) \simeq \Coind_{\mathfrak{g}_0\oplus \mathfrak{g}_1}^{\mathfrak{g}} V(\lambda), 
\] 
where $\gamma=\sum_{\alpha\in\Delta(\gg_{-1})}\alpha=\sum_{i=1}^n(1-n)\varepsilon_i$. We will also write $\nabla_{P(n)}(\lambda)$ to specify that the thin Kac module is a representation over the algebraic supergroup $P(n)$.

Let
\[ 
\rho: = \sum_{1\leq i\leq n} (n-i)\varepsilon_i. 
\]
  
\begin{lemma}
\label{lem:superchar-thin-Kac}
The supercharacter of the thin Kac module $\nabla(\lambda)$ with weight $\lambda$ is 
\begin{equation}
\label{eqn:superchar-thin-Kac-Pn}
\sch \nabla(\lambda) = (-1)^{p(\lambda)} 
\frac{\mathcal{R}_{-1}}{
e^{\rho}\mathcal{R}_0} 
\sum_{w\in W}(-1)^{\ell(w)}e^{w(\lambda+ \rho)}.
\end{equation}

\end{lemma}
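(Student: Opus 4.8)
The plan is to restrict $\nabla(\lambda)$ to the even subalgebra $\mathfrak g_0\cong\mathfrak{gl}(n)$, where it becomes a tensor product of an exterior algebra with the simple highest weight module $V(\lambda)$, and then to invoke the classical Weyl character formula. Using the coinduced description $\nabla(\lambda)\simeq\Coind_{\mathfrak g_0\oplus\mathfrak g_1}^{\mathfrak g}V(\lambda)$ together with the Poincar\'e--Birkhoff--Witt isomorphism $U(\mathfrak g)\cong U(\mathfrak g_{-1})\otimes U(\mathfrak g_0\oplus\mathfrak g_1)$, one obtains an isomorphism of $\mathfrak g_0$-modules
\[
\nabla(\lambda)\big|_{\mathfrak g_0}\;\cong\;\Lambda^{\bullet}(\mathfrak g_{-1}^{*})\otimes V(\lambda),
\]
where we use that $\mathfrak g_{-1}$ is a purely odd abelian space, so that $U(\mathfrak g_{-1})$ is the finite-dimensional exterior algebra $\Lambda^{\bullet}(\mathfrak g_{-1})$ and $U(\mathfrak g_{-1})^{*}\cong\Lambda^{\bullet}(\mathfrak g_{-1}^{*})$ as $\mathfrak g_0$-modules. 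Since the supercharacter of a tensor product of $\mathfrak h$-modules is the product of the supercharacters, it then remains only to compute $\sch\Lambda^{\bullet}(\mathfrak g_{-1}^{*})$ and $\sch V(\lambda)$.

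For the exterior-algebra factor, I would use that the supercharacter of $\Lambda^{\bullet}(U)$ for a purely odd space $U$ with weights $\theta_1,\dots,\theta_m$ equals $\prod_{i=1}^{m}(1-e^{\theta_i})$, since inserting the parity sign $(-1)^{k}$ on $\Lambda^{k}(U)$ turns the generating product $\prod_i(1+e^{\theta_i})$ into $\prod_i(1-e^{\theta_i})$. As the weights of $\mathfrak g_{-1}^{*}$ are precisely $\{-\alpha:\alpha\in\Delta(\mathfrak g_{-1})\}$, this gives $\sch\Lambda^{\bullet}(\mathfrak g_{-1}^{*})=\prod_{\alpha\in\Delta(\mathfrak g_{-1})}(1-e^{-\alpha})=\mathcal R_{-1}$. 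For the module $V(\lambda)$, note that it is a representation of the purely even algebra $\mathfrak g_0$, hence concentrated in a single parity, which by our normalization of the parity of the highest weight vector is $p(\lambda)$; therefore $\sch V(\lambda)=(-1)^{p(\lambda)}\ch V(\lambda)$. Finally, the Weyl character formula for $\mathfrak{gl}(n)$ yields $\ch V(\lambda)=(e^{\rho}\mathcal R_0)^{-1}\sum_{w\in W}(-1)^{\ell(w)}e^{w(\lambda+\rho)}$; here $\rho=\sum_i(n-i)\varepsilon_i$ differs from the half-sum of the positive roots of $\mathfrak g_0$ by a $W$-invariant vector, which is immaterial for the Weyl character formula, and $e^{\rho}\mathcal R_0$ is the anti-invariant Weyl denominator.

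Multiplying the three contributions gives exactly \eqref{eqn:superchar-thin-Kac-Pn}. Alternatively, one can run the same computation from the induced description $\nabla(\lambda)=\Pi^{n(n-1)/2}\Ind_{\mathfrak g_0\oplus\mathfrak g_1}^{\mathfrak g}V(\lambda-\gamma)$, so that $\nabla(\lambda)\big|_{\mathfrak g_0}\cong\Pi^{n(n-1)/2}\big(\Lambda^{\bullet}(\mathfrak g_{-1})\otimes V(\lambda-\gamma)\big)$, and then keep track of the parity shift $\Pi^{n(n-1)/2}$ and of the weight shift by $\gamma=\sum_{\alpha\in\Delta(\mathfrak g_{-1})}\alpha$ (the weight of the one-dimensional top exterior power $\Lambda^{n(n-1)/2}(\mathfrak g_{-1})$); these two effects cancel and reproduce the same answer. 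I expect the only delicate point to be this parity and sign bookkeeping: one has to dualize $\mathfrak g_{-1}$ to $\mathfrak g_{-1}^{*}$ correctly in the coinduced picture (equivalently, absorb the shift by $\gamma$ in the induced picture) so that $\mathcal R_{-1}$ appears on the nose rather than $e^{\gamma}\mathcal R_{-1}$ up to a sign, and one has to verify that the overall parity normalization produces precisely the factor $(-1)^{p(\lambda)}$. Once these are pinned down, the remaining work is just the classical $\mathfrak{gl}(n)$ character computation.
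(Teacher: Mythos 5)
Your proof is correct and follows essentially the same route as the paper: both decompose $\nabla(\lambda)\cong\bigwedge(\mathfrak g_{-1}^*)\otimes V(\lambda)$ over $\mathfrak h$ (via the coinduced realization), multiply $\sch\bigwedge(\mathfrak g_{-1}^*)=\mathcal R_{-1}$ by the Weyl character formula for $V(\lambda)$, and account for the parity normalization giving $(-1)^{p(\lambda)}$. The only cosmetic difference is how the shift from $\rho_0$ to $\rho$ is justified — the paper uses the identity $-\rho_0+w\rho_0=\rho-w\rho$, while you observe $\rho-\rho_0$ is $W$-invariant — which are equivalent observations.
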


\begin{proof} 
Since $\nabla(\lambda)\cong \bigwedge 
(\mathfrak{g}_{-1}^*)\otimes V(\lambda)$ as $\mathfrak h$-modules,  
the supercharacter of $\bigwedge(\mathfrak{g}_{-1}^*)$ 
and the character of $V(\lambda)$ are 
\[
\sch \bigwedge (\mathfrak{g}_{-1}^*) = \prod_{\beta\in\Delta(\mathfrak{g}_{-1})}(1-e^{-\beta})   
\quad 
\mbox{ and } 
\quad 
\ch V(\lambda) =  
\left( e^{\rho_0} 
\mathcal{R}_0 
\right)^{-1}
\sum_{w\in W}(-1)^{\ell(w)}w(e^{\lambda+\rho_0}),   
\] 
respectively, 
where $\rho_0=\frac{1}{2}\sum_{\alpha\in\Delta^+(\mathfrak{g}_{0})}\alpha$. 
Since $-\rho_0+w\rho_0=\rho-w\rho$, we obtain \eqref{eqn:superchar-thin-Kac-Pn}. 
\end{proof}

\subsection{Weight diagrams}
Let $\{ \overline{\lambda}_1,\ldots, \overline{\lambda}_n\}\subseteq \mathbb{Z}$ be such that $\lambda+\rho = \sum_{i=1}^n \overline{\lambda}_i \varepsilon_i$.  
The weight diagram $d_{\lambda}$ corresponding to a dominant weight $\lambda$ is the labeling of the line of integers by symbols $\bullet$ and $\circ$, where $i$ has label $\bullet$ if $i\in \{ \overline{\lambda}_1,\ldots, \overline{\lambda}_n\}$, 
and $\circ$ if $i\not\in \{ \overline{\lambda}_1,\ldots, \overline{\lambda}_n\}$. 
For example, $ds_{0}$ is 
\[ 
\xymatrix@-1pc{
\ldots & \underset{-1}{\circ} & \underset{0}{\bullet} & 
\underset{1}{\bullet} & \ldots & \underset{n-1}{\bullet} & \underset{n}{\circ} &\underset{n+1}{\circ} & \ldots 
}
\] 
and $d_{-3\varepsilon_n-\varepsilon_{n-1}}$ is 
\[ 
\xymatrix@-1pc{
\ldots & \underset{-4}{\circ} & \underset{-3}{\bullet} 
& \underset{-2}{\circ} & \underset{-1}{\circ} & \underset{0}{\bullet} & 
\underset{1}{\circ}  & \underset{2}{\bullet} & \underset{3}{\bullet} &\ldots  &\underset{n-1}{\bullet}. 
}
\]

Note that $\lambda \leq \mu$ if and only if $\lambda_i \geq \mu_i$ for each $i$. In terms of weight diagrams,  the $i$-th black ball in $d_{\lambda}$ (counted from left) lies further to the right of the $i$-th black ball of $d_{\mu}$.

\subsection{The Grothendieck ring}
\label{subsubsection:reduced-Gr-Pn}
Let $K(P(n))$ be the Grothendieck ring of  $\mathcal{F}_n$, and define 
\begin{equation}
\label{eqn:red-Groth-ring}
J(P(n)) = K(P(n))/\langle [M] + [\Pi M]: M \in \mathcal{F}_n \rangle.  
\end{equation}
The ring $J(P(n))$ is isomorphic to the reduced Grothendieck ring 
$K(P(n))/\langle [M]-[\Pi M] : M\in \mathcal{F}_n \rangle$,  
with the isomorphism given by $[L(\lambda)]\mapsto (-1)^{p(\lambda)}[L(\lambda)]$, 
where $L(\lambda)$ is the simple module of highest weight $\lambda$.

One may identify $J(P(n))$ as the ring of supercharacters as follows: 
let $\Lambda \subseteq \mathfrak{h}^*$ be the abelian group of integral weights of $\mathfrak{g}_0$ and $W$ be the Weyl group of $\mathfrak{g}_0$. 
The supercharacter function  $\sch: J(P(n))\rightarrow  \Span_ {\mathbb Z }\{e^\lambda : \lambda\in\Lambda\}$,  sends $[M]\mapsto \sch(M)$. 
 Since isomorphic modules have the same supercharacter and $\sch(M) = -\sch \Pi M$, $\sch$ is well-defined. Furthermore, $\sch$ is injective since two irreducible modules have the same character if and only if they are isomorphic.

 Throughout this manuscript, we will also write $x_i:= e^{\varepsilon_i}$ for $1\leq i\leq n$. 
The ring $$J_n:= \{f\in \mathbb{Z}[x_1^{\pm 1},\ldots, x_n^{\pm 1}]^{S_n}: f|_{x_i = x_j^{-1}=t} \mbox{ is independent of }t \mbox{ for }i\not=j \}$$ is then identified with a subring of $\Span_ {\mathbb Z }\{e^\lambda : \lambda\in\Lambda\}$. 

The following lemma is proved using restriction  to  subalgebras of the form $\mathfrak g_{-\alpha}\oplus \mathfrak h \oplus \mathfrak g_\alpha$ where $\alpha$ is an odd root and $2\alpha$ is not a root.

\begin{lemma}[{\cite[Prop. 4.3]{MR2776360}}]
\label{lemma:Pn-superchar-containment}
We have 
\[ J(P(n))\subseteq J_n. 
\]  
\end{lemma}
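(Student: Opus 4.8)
The plan is to show that every supercharacter $\sch(M)$ for $M \in \mathcal F_n$ lies in $J_n$, i.e. is a symmetric Laurent polynomial whose specialization $x_i = x_j^{-1} = t$ is independent of $t$. Since the Grothendieck ring $J(P(n))$ is generated by classes of simple modules, and $\sch$ is a ring homomorphism into $\Span_{\mathbb Z}\{e^\lambda\}$, it suffices to check the two defining properties of $J_n$ on $\sch(M)$ for arbitrary $M$. Symmetry under $S_n = W$ is immediate: $\mathfrak h$ sits inside $\mathfrak g_0 \simeq \mathfrak{gl}(n)$, the $W$-action permutes the $\varepsilon_i$, and the weight-space dimensions (hence superdimensions) of any $\mathfrak g$-module are $W$-invariant because $M$ is in particular a $\mathfrak g_0$-module. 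So the real content is the supersymmetry condition.

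For the supersymmetry condition I would follow the strategy of \cite[Prop. 4.3]{MR2776360}: reduce to a rank-one computation by restricting $M$ to the subalgebra $\mathfrak a_\alpha := \mathfrak g_{-\alpha} \oplus \mathfrak h \oplus \mathfrak g_{\alpha}$, where $\alpha$ is an odd isotropic root such that $2\alpha \notin \Delta$. In the periplectic root system $\Delta(\mathfrak g_1) = \{\varepsilon_i + \varepsilon_j\}$, the roots with $i < j$ are exactly the odd roots that are not twice a root (the roots $2\varepsilon_i$ are excluded). Fix such an $\alpha = \varepsilon_i + \varepsilon_j$ with $i \neq j$. Because $\beta$ is odd and $[\mathfrak g_\alpha, \mathfrak g_\alpha] \subseteq \mathfrak g_{2\alpha} = 0$, one checks that $\mathfrak a_\alpha$ is (a central extension of) the Lie superalgebra $\mathfrak{gl}(1|1)$-type object with a single pair of odd root vectors $e_\alpha, f_{-\alpha}$ satisfying $[e_\alpha, e_\alpha] = 0$, $[f_{-\alpha}, f_{-\alpha}] = 0$, and $[e_\alpha, f_{-\alpha}] = h_\alpha \in \mathfrak h$. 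The specialization $x_i = x_j^{-1} = t$ corresponds exactly to evaluating $e^{\varepsilon_i - (-\varepsilon_j)} = e^{\alpha}$ at... more precisely, to restricting a character to the line in $\mathfrak h^*$ on which $\alpha$ vanishes, i.e. to the annihilator of $h_\alpha$.

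The key step is then the following rank-one fact: for any finite-dimensional module $N$ over $\mathfrak a_\alpha$, the supercharacter $\sch(N)$, when restricted to the hyperplane $\alpha = 0$, is a constant (independent of the coordinate dual to $\alpha$). This is because $N$ decomposes into $\mathfrak a_\alpha$-isotypic pieces, and the only indecomposable finite-dimensional $\mathfrak a_\alpha$-modules are either one-dimensional (on which $h_\alpha$ acts by $0$, contributing a single $W$-symmetric term on the hyperplane) or two-dimensional of the form spanned by $v$ and $e_\alpha v$ (respectively $f_{-\alpha} v$), whose two weights $\mu$ and $\mu \pm \alpha$ differ by $\alpha$ and carry opposite parity; these contribute $e^\mu - e^{\mu \pm \alpha}$, which vanishes identically on $\{\alpha = 0\}$. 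Summing over all isotypic components, the restriction of $\sch(N)|_{\alpha = 0}$ collects only contributions from the one-dimensional summands sitting at weights on the hyperplane, and in particular does not involve the coordinate $t$ — this is precisely independence of $t$. Applying this to $N = \op{Res}^{\mathfrak g}_{\mathfrak a_\alpha} M$ and noting that $\sch(M)|_{x_i = x_j^{-1} = t}$ is computed from the $\mathfrak a_\alpha$-module structure alone gives the claim.

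\textbf{Main obstacle.} The delicate point is the classification of finite-dimensional $\mathfrak a_\alpha$-modules and the careful bookkeeping of parities: one must verify that in every two-dimensional indecomposable the two weights genuinely differ by exactly $\alpha$ and have opposite superdimension sign, so that the contribution cancels on $\alpha = 0$; and one must handle the possibility that $\mathfrak a_\alpha$ is a nontrivial central extension (the element $h_\alpha$ may act nontrivially even on pieces where $e_\alpha, f_{-\alpha}$ act as zero). Since \cite[Prop. 4.3]{MR2776360} carries out exactly this rank-one reduction in the basic classical setting and the periplectic odd roots $\varepsilon_i + \varepsilon_j$ ($i<j$) behave identically from the point of view of a single rank-one subalgebra, the argument transfers essentially verbatim; the only thing to double-check is that the periplectic Cartan pairing restricted to $\mathfrak a_\alpha$ still makes $\alpha$ isotropic in the relevant sense, which follows from $2\alpha \notin \Delta$.
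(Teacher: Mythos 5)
Your overall route is the one the paper itself takes: the paper proves this lemma only by invoking \cite[Prop.~4.3]{MR2776360} and restricting to the rank-one subalgebras $\mathfrak a_\alpha=\mathfrak g_{-\alpha}\oplus\mathfrak h\oplus\mathfrak g_\alpha$ with $\alpha=\varepsilon_i+\varepsilon_j$, $i\neq j$, and your reduction (the $S_n$-symmetry coming from $\mathfrak g_0$, and the supersymmetry condition coming from cancellation of weight pairs $\mu,\ \mu+\alpha$ of opposite parity, since $e^{\alpha}\mapsto 1$ under $x_i=x_j^{-1}=t$) is exactly that argument.

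There is, however, one step in your write-up that is false as stated — and it is precisely the point you flag as the main obstacle. It is not true that every finite-dimensional indecomposable $\mathfrak a_\alpha$-module is one- or two-dimensional: the subalgebra spanned by $e_\alpha$, $f_{-\alpha}$, $h_\alpha=[e_\alpha,f_{-\alpha}]$ together with $\mathfrak h$ is of $\mathfrak{gl}(1|1)$-type, and its atypical blocks contain indecomposables of arbitrarily large dimension (already the projective cover of the trivial module is four-dimensional, and there are arbitrarily long zigzag modules). So an enumeration of indecomposables cannot be the justification. The standard repair needs no classification. Decompose $\operatorname{Res}^{\mathfrak g}_{\mathfrak a_\alpha}M$ into generalized $h_\alpha$-eigenspaces; since $\alpha(h_\alpha)=0$ (here $h_\alpha$ is proportional to $E_{ii}-E_{jj}$), the element $h_\alpha$ commutes with $e_\alpha$ and $f_{-\alpha}$, so these eigenspaces are $\mathfrak a_\alpha$-stable, and $e_\alpha^2=f_{-\alpha}^2=0$ gives $(e_\alpha f_{-\alpha})^2=h_\alpha e_\alpha f_{-\alpha}$. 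On a generalized eigenspace with eigenvalue $c\neq0$ the operators $h_\alpha^{-1}e_\alpha f_{-\alpha}$ and $h_\alpha^{-1}f_{-\alpha}e_\alpha$ are complementary idempotents, and $e_\alpha$ is an odd, weight-shifting (by $\alpha$) bijection between their images; hence the supercharacter of this part is a sum of terms $e^{\mu}-e^{\mu+\alpha}$, which vanish under the evaluation $x_i=x_j^{-1}=t$. On the generalized $0$-eigenspace every weight satisfies $\mu(h_\alpha)=\mu_i-\mu_j=0$, so its contribution has $t$-exponent zero. This yields exactly the rank-one statement you want, and the lemma then follows as you describe. Two cosmetic corrections: the specialization $x_i=x_j^{-1}=t$ is restriction of $\sch M$ to the hyperplane $\{h\in\mathfrak h:\alpha(h)=0\}$, with $t$ the coordinate in the $h_\alpha$-direction (not ``the annihilator of $h_\alpha$''); and the roots to use are $\pm(\varepsilon_i+\varepsilon_j)$ with $i<j$ — for $\alpha=2\varepsilon_i$ the issue is not that $2\alpha$ is a root but that $-\alpha$ is not, so $\mathfrak a_\alpha$ degenerates and gives no condition.
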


\subsection{Translation functors} 
\label{subsection:Translation-functors} 
Let  $\Theta' = -\otimes V:\mathcal{F}_n\rightarrow \mathcal{F}_n$ be an endofunctor. 
Consider the involutive anti-automorphism $\sigma:\mathfrak{gl}(n|n)\rightarrow \mathfrak{gl}(n|n)$ defined as 
\[ 
\begin{pmatrix}
A & B \\ 
C & D \\ 
\end{pmatrix}^{\sigma} := 
\begin{pmatrix}
-D^t & B^t \\ 
-C^t & -A^t \\ 
\end{pmatrix}. 
\] 
One can see that $\mathfrak{p}(n)=\{ x\in \mathfrak{gl}(n|n): x^{\sigma} = x\}$, and we set  
$\mathfrak{p}(n)^{\perp} := \{ x\in \mathfrak{gl}(n|n): x^{\sigma} = -x \}$. 

Since $\mathfrak{p}(n)$ and $\mathfrak{p}(n)^{\perp}$ form maximal isotropic subspaces with respect to the form $\mathfrak{str}XY$ we obtain
a nondegenerate bilinear $\mathfrak{p}(n)$-invariant pairing 
  $\langle \:\cdot\: , \:\cdot\: \rangle: \mathfrak{p}(n)\otimes \mathfrak{p}(n)^{\perp}\rightarrow \mathbb{C}$. 
Choose $\mathbb{Z}$-homogeneous bases $\{ X_i\}$ in $\mathfrak{p}(n)$ and $\{ X^i\}$ in $\mathfrak{p}(n)^{\perp}$ such that $\langle X^i , X_j \rangle = \delta_{ij}$. 
We define the fake Casimir element as 
\[
\Omega := 2 \sum_{i=1}^n X_i \otimes X^i \in \mathfrak{p}(n)\otimes \mathfrak{p}(n)^{\perp} \subseteq \mathfrak{p}(n) \otimes \mathfrak{gl}(n|n). 
\]
 
Given a $\mathfrak{p}(n)$-module $M$, let $\Omega_M:M\otimes V\rightarrow M\otimes V$ be the linear map 
\[ 
\Omega_M(m\otimes v) = 2 \sum_{1\leq i \leq n} (-1)^{\overline{X}_i \overline{m}} X_i m \otimes X^i v,
\]  
where $m\in M$ and $v\in V$ are homogeneous. By \cite[Lemma 4.1.4]{BDEHHILNSS}, we see that $\Omega_M$ commutes with the action of $\mathfrak{p}(n)$ on $M\otimes V$ for any $\mathfrak{p}(n)$-module $M$. 

For $k\in \mathbb{C}$, define a functor $\Theta_k':\mathcal{F}_n\rightarrow \mathcal{F}_n$ as $\Theta'=-\otimes V$ followed by the projection onto the generalized $k$-eigenspace for $\Omega$, i.e., 
\[
\Theta_k'(M) := \bigcup_{m> 0} \ker(\Omega-k\: \text{Id})^m \Big|_{M\otimes V}. 
\] 
Since $\Theta_k'=0$ if $k\notin\mathbb{Z}$, we set $\Theta' = \bigoplus_{k\in \mathbb{Z}} \Theta_k'$, 
and $\Theta_k := \prod^{k} \Theta_k'$ when $k\in \mathbb{Z}$. 
The endofunctors 
 $\Theta_k$ of $\mathcal{F}_n$ for $k\in \mathbb{Z}$ are exact.

The following is \cite[Prop. 5.2.2]{BDEHHILNSS}.  

\begin{proposition}[Translation of thin Kac modules]  \label{smallKac} Let $k\in\mathbb{Z}$. Then 
\begin{enumerate}
 \item\label{item:right-trans-fnr} 
    $\Theta'_k \nabla(\lambda) = \nabla(\mu'')$ if  $d_{\lambda}$ looks as follows at positions $k-1, k, k+1$, with $d_{\mu''}$ displayed underneath:
 \begin{align*} 
  d_\lambda= \xymatrix{  &\underset{k-1}{\bullet}  &\underset{k}{\bullet}  &\underset{k+1}{\circ} }  \\
 d_{\mu''}=\xymatrix{  &\underset{k-1}{\bullet}  &\underset{k}{\circ}  &\underset{k+1}{\bullet} }
\end{align*}
\item\label{item:left-trans-fnr}  
    $\Theta'_k \nabla(\lambda)=\Pi\nabla(\mu')$ if  $d_{\lambda}$ looks as follows at positions $k-1, k, k+1$, with $d_{\mu'}$ displayed underneath:
\begin{align*} 
 d_{\lambda}= \xymatrix{  &\underset{k-1}{\circ}  &\underset{k}{\bullet}  &\underset{k+1}{\bullet}   }  \\
  d_{ \mu'}= \xymatrix{  &\underset{k-1}{\bullet}  &\underset{k}{\circ}  &\underset{k+1}{\bullet}  }
\end{align*}
\item\label{item:SES-trans-fnr} 
    In case $d_\lambda$ looks locally at positions $k-1,k,k+1$ as below, there is a short exact sequence
\[
0 \rightarrow \nabla(\mu'') \rightarrow \Theta'_k \nabla(\lambda) 
\rightarrow  \Pi\nabla(\mu') \rightarrow 0, 
\] 
where $d_{\mu'}$ and $d_{\mu''}$ are obtained from $d_\lambda$  
by moving one black ball away from position $k$ 
(to position $k-1$, respectively, $k+1$) as follow:
\begin{align*} 
  d_\lambda= \xymatrix{  &\underset{k-1}{\circ}  &\underset{k}{\bullet}  &\underset{k+1}{\circ}   }  \\
   d_{\mu'} = \xymatrix{  &\underset{k-1}{\bullet}  &\underset{k}{\circ}  &\underset{k+1}{\circ}  }\\
    d_{\mu''}=\xymatrix{  &\underset{k-1}{\circ}  &\underset{k}{\circ}  &\underset{k+1}{\bullet}    
     }
\end{align*}
\item  
    $\Theta'_k\nabla(\lambda) =0$ in all other cases.
\end{enumerate}
\end{proposition}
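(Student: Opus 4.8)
The plan is to combine two essentially independent inputs — a branching description of $\nabla(\lambda)\otimes V$ as a $\mathfrak g$-module, and the eigenvalues by which the fake Casimir $\Omega$ acts on its pieces — after which the statement drops out, since by definition $\Theta'_k$ is the (exact) functor sending $M$ to the generalized $k$-eigenspace of $\Omega$ on $M\otimes V$. \emph{Step 1 (branching).} Put $\mathfrak q:=\mathfrak g_0\oplus\mathfrak g_1$, so $\nabla(\mu)\simeq\Coind_{\mathfrak q}^{\mathfrak g}V(\mu)$. Restricting $V$ to $\mathfrak q$, the even part $V_{\bar 0}$ is a submodule — it is annihilated by $\mathfrak g_1$ and is the standard $\mathfrak{gl}(n)$-module — while $V_{\bar 1}\simeq V/V_{\bar 0}$ is the quotient, the costandard $\mathfrak{gl}(n)$-module, again with $\mathfrak g_1$ acting by $0$. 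Since $\mathfrak g_1$ also kills $V(\lambda)$, and since coinduction from $\mathfrak q$ (whose opposite nilradical $\mathfrak g_{-1}$ is finite-dimensional and purely odd) is exact, the tensor identity $\big(\Coind_{\mathfrak q}^{\mathfrak g}N\big)\otimes V\cong\Coind_{\mathfrak q}^{\mathfrak g}\!\big(N\otimes V|_{\mathfrak q}\big)$ applied to $0\to V_{\bar 0}\to V|_{\mathfrak q}\to V_{\bar 1}\to 0$ tensored with $V(\lambda)$ gives
\[
0\to\Coind_{\mathfrak q}^{\mathfrak g}\!\big(V(\lambda)\otimes V_{\bar 0}\big)\to\nabla(\lambda)\otimes V\to\Coind_{\mathfrak q}^{\mathfrak g}\!\big(V(\lambda)\otimes V_{\bar 1}\big)\to 0 .
\]
On the outer terms $\mathfrak g_1$ acts by $0$, so by Pieri's rule $V(\lambda)\otimes V_{\bar 0}=\bigoplus_iV(\lambda+\varepsilon_i)$ and $V(\lambda)\otimes V_{\bar 1}=\bigoplus_jV(\lambda-\varepsilon_j)$, over those $i$, resp. $j$, with $\lambda+\varepsilon_i$, resp. $\lambda-\varepsilon_j$, dominant. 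As $\overline{\lambda}=\lambda+\rho$, those are exactly the indices for which the black ball of $d_\lambda$ sitting at position $\overline{\lambda}_i$ (resp. $\overline{\lambda}_j$) can be moved one step right (resp. left) into an empty neighbouring site. Keeping track of parities (with the conventions of Section~\ref{subsection:thin-Kac}) one gets
\[
0\to\bigoplus_i\nabla(\lambda+\varepsilon_i)\to\nabla(\lambda)\otimes V\to\bigoplus_j\Pi\,\nabla(\lambda-\varepsilon_j)\to 0 ,
\]
the right-moving terms forming the submodule and the left-moving terms the quotient.

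\emph{Step 2 (eigenvalues of $\Omega$).} By \cite[Lemma 4.1.4]{BDEHHILNSS}, $\Omega_{\nabla(\lambda)}$ commutes with $\mathfrak g$, hence preserves the filtration of Step~1; I would show it acts on the subquotient $\nabla(\lambda\pm\varepsilon_i)$ by the scalar $\overline{\lambda}_i$, the position of the moved ball \emph{before} the move, the same number for the right- and for the left-move. Since $\mathfrak p(n)$ has no invariant form, $\Omega$ is not central and this cannot be read off a central character; instead one evaluates $\Omega_{\nabla(\lambda)}(m\otimes v)=2\sum_i(-1)^{\overline{X}_i\overline{m}}X_im\otimes X^iv$ on $v_\lambda\otimes v$, where $v_\lambda$ is the highest weight vector of $\nabla(\lambda)$ and $v$ an appropriate weight vector of $V$, modulo the lower filtration term: choosing dual bases $\{X_i\}\subset\mathfrak p(n)$, $\{X^i\}\subset\mathfrak p(n)^\perp$ compatibly with the triangular decomposition, every term with $X_i$ a raising operator of $\mathfrak g$ vanishes, and the surviving contributions (the Cartan part, and the lowering operators of $\mathfrak g_0$ whose $\langle\,\cdot\,,\,\cdot\,\rangle$-duals act on $V$ through the matching raising operators of $\mathfrak{gl}(n|n)$) sum to $\overline{\lambda}_i$ once the $\rho$-shift from normal ordering is included. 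Equivalently one may use $\sum_i\big(X_i\otimes X^i+X^i\otimes X_i\big)=\Omega^{\mathfrak{gl}(n|n)}$, the Casimir tensor of $\mathfrak{gl}(n|n)$ for the form $\str(XY)$, and compare its action on $\nabla(\lambda)\otimes V$ with that of the coproduct of the $\mathfrak{gl}(n|n)$-Casimir.

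\emph{Conclusion.} Applying the exact functor "generalized $k$-eigenspace of $\Omega$" to the short exact sequence of Step~1 and using Step~2: the submodule contributes $\nabla(\lambda+\varepsilon_i)$ exactly when $d_\lambda$ has a black ball at position $k$ with $k+1$ empty (this term is $\nabla(\mu'')$), and the quotient contributes $\Pi\,\nabla(\lambda-\varepsilon_i)$ exactly when that ball has $k-1$ empty (this term is $\Pi\,\nabla(\mu')$); since there is at most one black ball at position $k$, each side contributes at most one term. The four cases are precisely the four local configurations at $k-1,k,k+1$: a ball at $k$ with only $k+1$ empty gives $\Theta'_k\nabla(\lambda)=\nabla(\mu'')$; a ball at $k$ with only $k-1$ empty gives $\Theta'_k\nabla(\lambda)=\Pi\,\nabla(\mu')$; a ball at $k$ with both neighbours empty gives $0\to\nabla(\mu'')\to\Theta'_k\nabla(\lambda)\to\Pi\,\nabla(\mu')\to 0$; and no ball at $k$, or a ball at $k$ with both neighbours occupied, gives $\Theta'_k\nabla(\lambda)=0$ — with $d_{\mu''}$, $d_{\mu'}$ the diagrams obtained from $d_\lambda$ by moving the ball at $k$ to $k+1$, resp. $k-1$, as pictured.

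\emph{Main obstacle.} The crux is Step~2: extracting the exact scalar $\overline{\lambda}_i$ demands a sign-careful computation with $\Omega$ (or the comparison with the genuine $\mathfrak{gl}(n|n)$-Casimir), precisely because the absence of an invariant bilinear form on $\mathfrak p(n)$ removes the usual "central element acts by a scalar" shortcut. Step~1 is routine once the tensor identity and the exactness of parabolic coinduction are granted, the only delicate point being the parity bookkeeping for the left-moving summands.
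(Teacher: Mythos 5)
First, note that the paper itself contains no proof of Proposition~\ref{smallKac}: it is quoted directly from \cite[Prop.~5.2.2]{BDEHHILNSS}, so the only meaningful comparison is with the argument in that reference, and your outline mirrors it: filter $\nabla(\lambda)\otimes V$ using the $\mathfrak{q}$-module sequence $0\to V_{\bar 0}\to V\to V_{\bar 1}\to 0$ (with $\mathfrak q=\mathfrak g_0\oplus\mathfrak g_1$), the tensor identity and exactness of coinduction, and Pieri's rule, then sort the subquotients $\nabla(\lambda\pm\varepsilon_i)$ according to generalized $\Omega$-eigenvalues. Step~1 is sound ($V_{\bar 0}$ is indeed the $\mathfrak q$-submodule, $V_{\bar 1}\cong V_{\bar 0}^*$ over $\mathfrak g_0$, and coinduction from $\mathfrak q$ is exact by PBW), and your concluding case analysis is exactly right \emph{granted} Step~2.

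The genuine gap is that Step~2 — which you correctly identify as the crux — is a plan rather than an argument, and it is precisely where the content of the proposition sits. You assert, without computing, that $\Omega$ acts on the subquotient attached to the ball at position $\overline{\lambda}_i$ by the scalar $\overline{\lambda}_i=\lambda_i+n-i$, and, crucially, that the \emph{same} scalar occurs for the right move $\lambda+\varepsilon_i$ (in the sub) and the left move $\lambda-\varepsilon_i$ (in the quotient); this coincidence, together with the normalization (the factor $2$ in $\Omega$, the duality via $\str$, and the $\rho$-shift produced by the non-Cartan terms in which the $\mathfrak p(n)$-factor lowers while its $\mathfrak p(n)^{\perp}$-dual raises on $V$), is exactly what forces cases (1)--(4) to come out as stated, and none of it can be read off a central character since $\Omega$ is not central. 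Until the evaluation of $\Omega$ on $v_\lambda\otimes e_i$ and $v_\lambda\otimes f_i$ modulo the filtration is actually carried out (or cited), the proposal has only reduced the proposition to the statement it set out to prove. A secondary unverified point is the parity pattern: whether the right-moving terms appear without $\Pi$ and the left-moving ones with $\Pi$ uniformly in $\lambda$ depends on the parity normalization chosen for $\nabla(\mu)$ (with the convention that the highest weight vector of $\nabla(\mu)$ has parity $p(\mu)$, as suggested by the sign $(-1)^{p(\lambda)}$ in Lemma~\ref{lem:superchar-thin-Kac}, the placement of $\Pi$ would alternate with $|\lambda|$); "keeping track of parities" therefore requires pinning down the convention of \cite{BDEHHILNSS} explicitly, not just invoking Section~\ref{subsection:thin-Kac}.
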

\subsection{Parabolic induction} \label{parabolic induction}
%{\color{blue} I think it is better to move the definition and properties here. And I slightly changed the notations.}
We consider the standard scalar product on $\mathfrak h^*$ such that $(\varepsilon_i,\varepsilon_j)=\delta_{i,j}$. Let $\gamma$ be some weight. Set
$$\mathfrak{k}:=\mathfrak h\oplus \bigoplus_{(\alpha,\gamma)=0}\mathfrak{g}_{\alpha},\quad  \mathfrak{r:}=\bigoplus_{(\alpha,\gamma)>0}\mathfrak{g}_{\alpha},\quad  \mathfrak{q}:=\mathfrak{k}\oplus \mathfrak{r}.$$
The subalgebra $\mathfrak{q}$ is called a parabolic subalgebra of $\mathfrak{g}$ with the Levi subalgebra $\mathfrak{k}$ and the nilpotent radical $\mathfrak{r}$.
If $Q$ is the corresponding parabolic subgroup of $G$, then $G/Q$ is a generalized flag supervariety.

Let $\lambda$ be a weight such that $\lambda(\mathfrak{h}\cap [\mathfrak{k},\mathfrak{k}])=0$. 
We denote by $\mathcal{O}(-\lambda)$ the line bundle on $G/Q$ induced by the one dimensional representation of $Q$ with weight $-\lambda$. 
Set 
\[ 
\mathcal{E}(\lambda) = \sum_i (-1)^i \sch H^i(G/P,\mathcal{O}(-\lambda)). 
\] 
By definition $\mathcal E(\lambda)$ is in $J(P(n))$. By \cite[Prop. 1]{MR2734963}, 
\[  
\mathcal{E}(\lambda)  
= \frac{1}{  e^\rho 
\mathcal{R}_0} \sum_{w\in W} 
(-1)^w w\left(e^{\lambda+\rho}\prod_{\alpha\in \Delta_1(\mathfrak{r})} (1-e^{-\alpha})\right) 
\] 
where $\Delta_1(\mathfrak{r}):=\{\alpha\in \Delta_{1} : (\alpha,\gamma)>0  \}$.  
Since $e^\rho\mathcal{R}_0$ is W-anti-invariant, this can be written as 
\[ 
\mathcal{E}(\lambda) 
= \sum_{w\in W} w\left( 
e^{\lambda} \frac{\prod_{\alpha\in \Delta_1(\mathfrak{r})}(1-e^{-\alpha})  }{
\prod_{\alpha\in \Delta_0^+} (1-e^{-\alpha})}
\right). 
\]

\section{Duflo--Serganova homomorphism for $P(n)$}
\label{section:DS-functor}
We show that the Duflo--Serganova functor induces a homomorphism between the rings of supercharacters, and discuss the kernel of this homomorphism (cf. \cite{hoyt2016grothendieck}). 

\subsection{The $ds_n$ homomorphism}
\label{subsection:well-defined-kernel}
Let $x\in \mathfrak{g}_1\oplus \mathfrak g_{-1}$ such that $[x,x]=0$. Then $x^2$ is zero in $U(\mathfrak g)$ and for every $\mathfrak g$-module $M$, we define 
\[ 
M_x=\ker_M x/ xM, 
\] 
and similarly,
\[ 
\mathfrak g_x=\mathfrak g^x /[x,\mathfrak g], 
\] 
where 
$\mathfrak{g}^x=\{ g\in \mathfrak{g}: [x,g]=0\}$.
By \cite[Lemma 6.2]{duflo2005associated}, 
$M_x$ carries a natural $\mathfrak{g}_x$-module structure. 
Moreover, the
  Duflo--Serganova functor $DS_x:M\mapsto M_x$
  is a symmetric monoidal functor from the category of $\mathfrak{g}$-modules to the category of $\mathfrak{g}_x$-modules.
We consider a special case of  the DS functor:
\[
DS_{n}: \mathcal{F}_n \rightarrow \mathcal{F}_{n-2},\quad DS_x(M)=M_x, 
\]
defined as follows.
Suppose 
$x=x_\beta$ for $\beta\in \Delta(\mathfrak g_{-1})$. 
By \cite[Lemma 5.1.2]{ES-Deligne-perip}, 
$\mathfrak g_x \cong \mathfrak p(n-2)$. We embed $\mathfrak g_x$ in $\mathfrak g$ such that the Cartan subalgebra $\mathfrak h_x$ of  $\mathfrak g_x$ is contained in 
$\{h\in \mathfrak h : \beta(h)=0\}$. 
By \cite[Sec. 3]{hoyt2016grothendieck}, 
the map 
\begin{equation}
\label{eqn:induced-ds-map}
ds_{x}: J(P(n))\rightarrow J(P(n-2)), 
\quad 
\mbox{ where } 
\:\:
ds_{x}(\sch M)=\sch(DS_x(M)), 
\end{equation}
 is well-defined and equal to 
$(\sch M)|_{h_{x}}$. 
Since $\beta=-\varepsilon_i-\varepsilon_j$ and $f\in J(P(n))$ satisfies that 
$f|_{x_i=x_j^{-1}=t}$ is independent of $t$, we get that 
$f|_{h_{x}}=f|_{\{ h\in \mathfrak{h}: \varepsilon_i(h)=-\varepsilon_j(h)\}}=f|_{x_i=x_j^{-1}}$.

Let $-\varepsilon_i-\varepsilon_j\in \Delta(\mathfrak{g}_{-1})$, 
and define  
\begin{equation}
ds_n:J(P(n))\rightarrow J(P(n-2)),\quad ds_n=p\circ ds_{x_{\alpha}}, 
\end{equation} 
where $p$ is a bijection $p:\{1,\ldots, n \}\setminus \{ i,j\}\rightarrow \{ 1,\ldots, n-2\}$. 
Since the elements in $J(P(n))$ are $S_n$-invariant, 
$ds_n$ is independent of $i$ and $j$.  
Moreover, the map $ds_n$ extends naturally to   $J_n$  
by the evaluation
$ds_n(f)=f|_{x_{n-1}=x_n^{-1}=t}$ (eventually we show that $J(P(n))=J_n$ but for now $J_n\subseteq J(P(n))$).

We define 
\begin{equation}
\label{eqn:dsnk}
ds_n^{(k)} := ds_{n-2k+2} \circ \cdots \circ ds_{n}. 
\end{equation} 
Note that applying $ds_n^{(k)}$ is the same as applying $ds_x$ for $x$ of higher rank.

\subsection{The kernel of $ds_n$}
\label{subsection:kernel-ds-map} 
The following proposition is a straightforward generalization of \cite[Thm. 17]{hoyt2016grothendieck}. 
  
\begin{proposition}
\label{lemma:kernel-DS-Pn}
The kernel of $ds_{n}$ is spanned by the supercharacters of thin Kac modules. 
\end{proposition}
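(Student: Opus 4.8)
The plan is to identify $\ker(ds_n)$ explicitly by combining two facts: the realization of $ds_n$ as the evaluation $f\mapsto f|_{x_{n-1}=x_n^{-1}=t}$, and the supercharacter formula for thin Kac modules from Lemma~\ref{lem:superchar-thin-Kac}. First I would check the easy inclusion: each $\sch\nabla(\lambda)$ lies in $\ker(ds_n)$. From \eqref{eqn:superchar-thin-Kac-Pn}, $\sch\nabla(\lambda)$ is $(-1)^{p(\lambda)}\mathcal{R}_{-1}\,(e^\rho\mathcal{R}_0)^{-1}\sum_{w\in W}(-1)^{\ell(w)}e^{w(\lambda+\rho)}$, and the factor $\mathcal{R}_{-1}=\prod_{i<j}(1-e^{\varepsilon_i+\varepsilon_j})=\prod_{i<j}(1-x_ix_j)$ contains the term $(1-x_{n-1}x_n)$, which vanishes under $x_{n-1}=x_n^{-1}$. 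Hence the whole expression evaluates to $0$, so the span of thin Kac supercharacters is contained in $\ker(ds_n)$. (One should be slightly careful that the product $\mathcal{R}_{-1}(e^\rho\mathcal R_0)^{-1}\sum_w(-1)^{\ell(w)}e^{w(\lambda+\rho)}$ is genuinely a polynomial so that "evaluation" makes sense termwise; this follows since $\sch\nabla(\lambda)$ is a character of an actual module.)

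For the reverse inclusion I would follow the strategy of \cite[Thm. 17]{hoyt2016grothendieck}, adapted to $P(n)$. The thin Kac supercharacters $\{\sch\nabla(\lambda):\lambda\in\Lambda_n\}$ form a $\mathbb Z$-basis of $J(P(n))$ — indeed $\nabla(\lambda)$ has a unique maximal weight $\lambda$ (with $\pm1$ leading coefficient), so the transition matrix between $\{[\nabla(\lambda)]\}$ and $\{[L(\lambda)]\}$ is unitriangular with respect to the dominance order on $\Lambda_n$. Given $f\in\ker(ds_n)$, write $f=\sum_\lambda c_\lambda\,\sch\nabla(\lambda)$ and argue that $c_\lambda=0$ whenever $\nabla(\lambda)$ is \emph{not} killed by $ds_n$, i.e.\ whenever $\lambda$ is such that $\sch\nabla(\lambda)$ does not already lie in the span of the "thin" ones. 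The cleanest way is to compute $ds_n(\sch\nabla(\lambda))$ directly from the formula: applying $x_{n-1}=x_n^{-1}=t$ to $\sum_{w\in W}(-1)^{\ell(w)}e^{w(\lambda+\rho)}$ and to $\mathcal{R}_{-1}/(e^\rho\mathcal R_0)$ and simplifying should give either $0$ or, up to sign and parity, the supercharacter $\sch\nabla(\mu)$ of a thin Kac module over $P(n-2)$, where $\mu$ is read off from the weight diagram $d_\lambda$ by a combinatorial rule (erasing the appropriate $\circ$/$\bullet$ pair at the relevant positions). Tracking which $\lambda$ give $0$ versus a nonzero image, and checking that the nonzero images are linearly independent in $J(P(n-2))$, then forces $c_\lambda=0$ exactly on the non-thin part, so $f$ is a combination of thin Kac supercharacters lying in the kernel. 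Combining with the first paragraph yields equality.

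The main obstacle I anticipate is the bookkeeping in the last step: making the evaluation $ds_n(\sch\nabla(\lambda))$ explicit requires handling the Weyl-group sum under the substitution $x_{n-1}=x_n^{-1}$ carefully (terms pair up and cancel, or collapse, depending on the positions of the balls in $d_\lambda$ near the relevant integers), and then matching the surviving expression against the $P(n-2)$ thin Kac formula including the correct parity sign $(-1)^{p(\mu)}$ and length/denominator factors. This is precisely the computation done in \cite{hoyt2016grothendieck} for $\mathfrak{gl}$-type and basic classical cases; the content here is verifying that the periplectic denominators $\mathcal R_{-1}$, $e^\rho\mathcal R_0$ behave compatibly under the embedding $\mathfrak g_x\hookrightarrow\mathfrak g$ fixed so that $\mathfrak h_x\subseteq\{\beta=0\}$ — which is guaranteed structurally since $\mathfrak g_x\cong\mathfrak p(n-2)$ and $ds_x$ is monoidal — so no genuinely new idea is needed beyond careful translation. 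I would also remark that once surjectivity of $ds_n$ is in hand (proved later), this kernel description plus dimension count over each evaluation fiber gives an alternative route, but the direct basis argument above is self-contained.
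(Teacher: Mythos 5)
Your first paragraph (every $\sch\nabla(\lambda)$ lies in $\ker ds_n$ because $\mathcal{R}_{-1}$ contains the factor $1-x_{n-1}x_n$) is correct, but the reverse inclusion — which is the actual content of the proposition — rests on a false claim and is circular. The supercharacters $\sch\nabla(\lambda)$, $\lambda\in\Lambda_n$, do \emph{not} form a $\mathbb{Z}$-basis of $J(P(n))$: by Lemma~\ref{lem:superchar-thin-Kac} each one equals $\pm\mathcal{R}_{-1}\cdot s_\lambda$ for a Laurent Schur function $s_\lambda$, so their span is exactly $\mathcal{R}_{-1}\cdot\mathbb{Z}[x_1^{\pm1},\ldots,x_n^{\pm1}]^{S_n}$, which does not contain $1=\sch L(0)$. (Unitriangularity of the transition matrix with the $[L(\mu)]$ does not help: inverting it over this infinite poset produces infinite expansions, so it does not yield finite $\mathbb{Z}$-combinations.) Indeed, if they were a basis, then together with your first paragraph you would conclude $\ker ds_n=J(P(n))$, i.e.\ $ds_n=0$, contradicting $ds_n(1)=1$. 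Moreover your plan to ``track which $\lambda$ give $0$ versus a nonzero image'' under $ds_n$ is vacuous, since by your own first paragraph every $\sch\nabla(\lambda)$ is sent to $0$. Thus writing an arbitrary $f\in\ker ds_n$ as $\sum_\lambda c_\lambda\sch\nabla(\lambda)$ presupposes precisely the inclusion you are trying to prove.

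The missing idea is a divisibility argument, which is what the paper (and the $\mathfrak{gl}$ case in \cite{hoyt2016grothendieck}) actually uses: if $f\in\ker ds_n$, then the vanishing of $f|_{x_{n-1}=x_n^{-1}=t}$ for all $t$ forces $1-x_{n-1}x_n$ to divide $f$ in the Laurent polynomial ring; since $f$ is $S_n$-invariant it is divisible by each $1-x_ix_j$, and these factors are pairwise coprime, so $f=\mathcal{R}_{-1}\cdot g$ with $g$ again $W$-invariant. Expanding $g$ in Laurent Schur functions and invoking Lemma~\ref{lem:superchar-thin-Kac} identifies each term $\mathcal{R}_{-1}\cdot s_\lambda$ with $\pm\sch\nabla(\lambda)$, so $f$ lies in the span of thin Kac supercharacters. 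This replaces your entire second step; no computation of $ds_n$ on thin Kac modules and no linear-independence bookkeeping over $P(n-2)$ is needed.
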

\begin{proof}
Suppose $f\in \ker ds_n$. Then $f$ is divisible by $1-x_{n-1}x_n$. 
Since $f$ is $W$-invariant,   $f$ is also divisible by $\prod_{i<j}(1-x_ix_j)$ and hence $$f=\prod_{i<j}(1-x_ix_j)g=\mathcal{R}_{-1}\cdot g,$$ where $g$ is also $W$-invariant.  
Write $g$ as a linear combinations of Schur functions 
\[ 
g=\sum_{\lambda\in\mathfrak{h}^*}^{\finite} a_{\lambda} e^{-\rho}\mathcal{R}_0^{-1}\sum_{w\in W}(\sgn w)w(e^{\lambda+\rho}).
\]   
Thus $f=\sum_{\lambda\in \mathfrak{h}^*}^{\finite} a_{\lambda}\nabla(\lambda)$. 
\end{proof}
% {\color{blue} I added Lemma below. It is not exactly obvious. We have written the proof in the paper with Inna but this one is still not posted.}
\subsection{Translation functors and $DS_n$} 
We will need the following statement, see  \cite[Corollary 3.0.2]{ES-KW-perip}.
\begin{lemma}
\label{commute} 
The functor $DS_n$ commutes with translations functors $\Theta'_k$. 
  \end{lemma}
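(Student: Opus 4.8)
The statement is that $DS_n$ commutes with the translation functors $\Theta'_k$, so the natural strategy is to trace the construction of $\Theta'_k$ through the Duflo--Serganova functor and exploit its monoidality. Recall $\Theta'_k$ is built from $\Theta' = -\otimes V$ together with the spectral projection onto the generalized $k$-eigenspace of the fake Casimir $\Omega_M$ acting on $M\otimes V$. The plan is: first, since $DS_x$ is a symmetric monoidal functor, there is a canonical isomorphism $DS_x(M\otimes V)\cong DS_x(M)\otimes DS_x(V)$. One computes $DS_x(V)$ directly from the definition $V_x=\ker_V x/xV$: for $x=x_\beta$ with $\beta\in\Delta(\gg_{-1})$ acting on the $(n|n)$-dimensional natural module, $V_x$ is the $(n-2|n-2)$-dimensional natural module of $\gg_x\cong\pp(n-2)$. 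This identifies $\Theta'\circ DS_n$ with $DS_n\circ \Theta'$ at the level of the underlying endofunctor $-\otimes V$.

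Second, and this is the crux, one must show that the fake Casimir is compatible with $DS_x$, i.e. that under the monoidal isomorphism $DS_x(M\otimes V)\cong DS_x(M)\otimes V_x$ the operator $DS_x(\Omega_M)$ induced on the subquotient corresponds to the fake Casimir $\Omega_{DS_x(M)}$ for $\pp(n-2)$ (up to a scalar shift that is uniform, hence harmless, or with no shift at all). The natural way to see this is to note that $\Omega$ is the image of a canonical element of $\gg\otimes\gg^\perp$ attached to the invariant pairing $\gg\otimes\gg^\perp\to\mathbb C$, that $DS_x$ applied to the adjoint-type modules $\gg$ and $\gg^\perp$ recovers $\gg_x$ and $(\gg_x)^\perp$ with their pairing (again using $[x,x]=0$ and the explicit description of $\gg^x/[x,\gg]$), and that the Casimir-type element is natural with respect to these identifications. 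Concretely, one picks the homogeneous dual bases $\{X_i\},\{X^i\}$ so that they restrict to dual bases in $\gg^x$ and $(\gg^\perp)^x$ and so that the "extra" basis elements lie in $xM$-type pieces that die in the subquotient; then the formula for $\Omega_M$ descends term by term to the formula for $\Omega_{DS_x(M)}$. This reduces the lemma to the cited \cite[Lemma 4.1.4]{BDEHHILNSS} plus \cite[Lemma 5.1.2]{ES-Deligne-perip} bookkeeping.

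Third, once the fake Casimirs are matched, commuting with the generalized $k$-eigenspace projection is formal: an exact monoidal functor takes a generalized eigenspace decomposition for an operator to the generalized eigenspace decomposition for the induced operator with the same (or uniformly shifted) eigenvalues, because $DS_x$ is additive and exact and kernels of $(\Omega-k\,\id)^m$ are preserved. Hence $DS_n\circ\Theta'_k \cong \Theta'_k\circ DS_n$, and applying the parity-corrected power $\prod^k(-)$ on both sides gives the statement for $\Theta_k$ as well. The main obstacle I anticipate is precisely the second step — verifying that the fake Casimir element, which is not associated to an honest invariant form on $\pp(n)$ but to the external pairing with $\pp(n)^\perp$, transports correctly under $DS_x$ and that no nonuniform eigenvalue shift is introduced; everything else is formal consequence of monoidality and exactness. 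Since the result is attributed to \cite[Corollary 3.0.2]{ES-KW-perip}, I would in practice cite that reference for this compatibility rather than redo the basis computation in full.
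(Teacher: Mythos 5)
Your proposal is correct and in effect coincides with the paper, which offers no argument for Lemma~\ref{commute} and simply cites \cite[Corollary 3.0.2]{ES-KW-perip}; the route you sketch --- monoidality of $DS_x$, the identification of $DS_x(V)$ with the natural $\mathfrak{p}(n-2)$-module, and the compatibility of the fake Casimir with $DS_x$ with no eigenvalue shift --- is exactly the content of that cited result, so deferring to it as you do is what the paper does. One small caveat: $DS_x$ is not exact (it is a cohomology-like subquotient), but your final step does not actually need exactness, only additivity and functoriality applied to the finite direct-sum decomposition $M\otimes V=\bigoplus_k \Theta'_k(M)$, on each summand of which $(\Omega-k\,\id)$ is nilpotent, so the slip is harmless.
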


\begin{corollary}
\label{lemma:translation-functor-image}
If 
$[M]\in \Im ds_{n}$, then 
$[\Theta_i(M)]\in \Im ds_{n}$ for every translation functor $\Theta_i$. 
\end{corollary}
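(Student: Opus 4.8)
The plan is to deduce the corollary from Lemma~\ref{commute} together with the already-established fact (from the construction of $ds_n$ via the Duflo--Serganova functor, equation~\eqref{eqn:induced-ds-map}) that $ds_n$ intertwines the supercharacter maps: $ds_n(\sch M) = \sch(DS_n(M))$. The key point is simply that $\Theta_i$ on $\mathcal{F}_n$ and the corresponding translation functor on $\mathcal{F}_{n-2}$ are matched up under $DS_n$, so applying $\Theta_i$ and then $ds_n$ gives the same element of $J(P(n-2))$ as applying $ds_n$ and then the translation functor downstairs.

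First I would fix $[M] \in \Im ds_n$, so there is a class $[N] \in J(P(n))$ with $ds_n([N]) = [M]$; since $J(P(n))$ is spanned by supercharacters of modules (indeed by $[L(\lambda)]$), and $ds_n$ is a ring homomorphism, I may assume $N$ is an honest $\mathfrak{p}(n)$-module (or a virtual combination thereof, which does not affect the argument since $\Theta_i$ and $ds_n$ are both additive). Then $ds_n([N]) = \sch(DS_n(N)) = [M]$ in $J(P(n-2))$. Now apply $\Theta_i$ downstairs: using Lemma~\ref{commute}, $\Theta'_i(DS_n(N)) \cong DS_n(\Theta'_i(N))$, and since $\Theta_i = \prod^i \Theta'_i$ is just a multiplicity-fattened version of $\Theta'_i$ (and $DS_n$ is additive on Grothendieck classes), we get $[\Theta_i(DS_n(N))] = [DS_n(\Theta_i(N))]$ in $J(P(n-2))$. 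On the other hand, $\Theta_i$ acts on supercharacters — since $\Theta_i(-) = -\otimes(\text{something})$ composed with a projection that is a direct sum decomposition — so $[\Theta_i M] = [\Theta_i(DS_n N)] = \sch(\Theta_i DS_n N)$, and I would want $ds_n(\sch \Theta_i N) = \sch(DS_n \Theta_i N)$, which is exactly \eqref{eqn:induced-ds-map} applied to the module $\Theta_i N$. Chaining these, $[\Theta_i M] = ds_n([\Theta_i N]) \in \Im ds_n$.

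The one point that needs a little care — and which I expect to be the main (mild) obstacle — is bookkeeping the projection onto generalized eigenspaces of the fake Casimir $\Omega$ under $DS_n$. Lemma~\ref{commute} as cited asserts $DS_n$ commutes with $\Theta'_k$, so the $k$-eigenvalue decomposition is already compatible; I would just remark that the fake Casimir for $\mathfrak{p}(n-2)$ is obtained from that for $\mathfrak{p}(n)$ under the embedding $\mathfrak{g}_x \hookrightarrow \mathfrak{g}$ fixed in Section~\ref{subsection:well-defined-kernel}, so the eigenvalue label $k$ is preserved, and the multiplicity prefactor $\prod^k$ passes through additively. Everything else is formal: $ds_n$ is a homomorphism, $\Theta_i$ is additive and exact, and $DS_n$ induces $ds_n$ on supercharacters by construction.

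\begin{proof}
Let $[M] \in \Im ds_n$ and choose $[N] \in J(P(n))$ with $ds_n([N]) = [M]$; by additivity of both $ds_n$ and $\Theta_i$ we may take $N$ to be a genuine module. By \eqref{eqn:induced-ds-map}, $[M] = ds_n(\sch N) = \sch(DS_n(N))$. By Lemma~\ref{commute}, $DS_n(\Theta'_i(N)) \cong \Theta'_i(DS_n(N))$, and since $\Theta_i = \prod^i \Theta'_i$ while $DS_n$ is additive on Grothendieck classes, $[DS_n(\Theta_i(N))] = [\Theta_i(DS_n(N))] = [\Theta_i(M)]$ in $J(P(n-2))$. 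Applying \eqref{eqn:induced-ds-map} once more to the module $\Theta_i(N)$ gives $ds_n(\sch \Theta_i(N)) = \sch(DS_n(\Theta_i(N))) = [\Theta_i(M)]$, so $[\Theta_i(M)] = ds_n([\Theta_i(N)]) \in \Im ds_n$.
\end{proof}
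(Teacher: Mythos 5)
Your proposal is correct and follows essentially the same route as the paper: pick a (virtual) preimage of $[M]$ under $ds_n$, reduce to an honest module by additivity, and apply Lemma~\ref{commute} together with the defining property \eqref{eqn:induced-ds-map} of $ds_n$ to get $ds_n([\Theta_i(N)])=[\Theta_i(M)]$. The paper's proof is just the compressed version of this chain of equalities, so no further comparison is needed.
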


\begin{proof}
%\color{pink} 
%consider the following commuting diagram 
%\[
%\xymatrix@-1pc{
%& \mathcal{F}_n\ar[dd]_(.68){\sch}|!{[d];[d]}\hole
%\ar[rr]^{DS_n}&  & \mathcal{F}_{n-2} \ar[dd]^{\str} \\ 
%\ar[ru]^{\Theta_i} \mathcal{F}_n\ar[rr]^(.65){DS_n} %\ar[dd]_{\sch} & & \mathcal{F}_{n-2} \ar[dd]^(.30){\str} %\ar[ur]^{\Theta_i}  & \\ 
% & J(P(n)) \ar[rr]^(.28){ds_n}|!{[r];[r]}\hole& &  J(P(n-2)), %\\ 
% J(P(n))\ar[rr]_{ds_n}\ar[ur]^{\theta_i} & & J(P(n-2)) %\ar[ur]_{\theta_i} & \\
%}
%\] 
%where $\theta_i$ is the map $\theta_i([A]) = [\Theta_i(A)]$.
%defined by the translation functor $\Theta_i$.  Indeed, 
%\color{black}
Suppose that
$DS_n(A)=M$ for some finite-dimensional $P(n)$-module $A$. 
% \color{pink}Then 
% $ds_{n}([A])  
% = [DS_n(A)]
% = [M]$ since the maps in the the front square (and the back 
% square) commute. 
% \color{black}
By Lemma~\ref{commute}, 
we have 
$ds_n([\Theta_i(A)]) 
= [DS_{n}(\Theta_i(A))] 
= [\Theta_i (DS_n (A))]
= [\Theta_i(M)]$.
\end{proof}

\section{Surjectivity of the Duflo-Serganova map for $P(n)$}
\label{section:main-thm-proof} 
As explained in Proposition \ref{lemma:kernel-DS-Pn}, the kernel of $ds_n$ is well-understood. We now turn to discuss the image of $ds_n$ and prove 
 the following theorem.

\begin{theorem}\label{thm:ds-map-surj}
The map $ds_n:J(P(n))\rightarrow J(P(n-2))$ is surjective. 
\end{theorem}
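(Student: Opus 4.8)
The plan is to prove surjectivity of $ds_n\colon J(P(n))\to J(P(n-2))$ by an explicit construction of preimages, using the two families of elements of $J(P(n))$ that we have control over: the Euler characteristics $\mathcal E(\lambda)$ of parabolic inductions, and the classes obtained from them by applying translation functors $\Theta_k$. The key observation is that $J(P(n-2))$ is spanned (over $\mathbb Z$) by supercharacters of simple modules $L(\mu)$, $\mu\in\Lambda_{n-2}$, so it suffices to exhibit, for each dominant integral $\mu$, an element of $J(P(n))$ whose image under $ds_n$ is $[L(\mu)]$ (or more modestly, to show that the image is a subgroup containing a spanning set, e.g. the supercharacters of thin Kac modules $\nabla(\mu)$ of $P(n-2)$, from which one bootstraps to all of $J(P(n-2))$).

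First I would compute $ds_n$ on the basic building blocks. Using the realization $ds_n(f)=f|_{x_{n-1}=x_n^{-1}=t}$ together with the closed formula $\mathcal E(\lambda)=\sum_{w\in W} w\!\left(e^{\lambda}\,\dfrac{\prod_{\alpha\in\Delta_1(\mathfrak r)}(1-e^{-\alpha})}{\prod_{\alpha\in\Delta_0^+}(1-e^{-\alpha})}\right)$, one checks that the substitution $x_{n-1}x_n=1$ sends $\mathcal E_{P(n)}(\lambda)$ to (up to sign and parity) the analogous Euler characteristic, or a thin Kac supercharacter, for $P(n-2)$: the factors $(1-x_{n-1}x_j)(1-x_nx_j)$ pair up and the denominator $\prod(1-e^{-\alpha})$ over $\Delta_0^+$ loses exactly the roots involving $\varepsilon_{n-1},\varepsilon_n$, reproducing Lemma~\ref{lem:superchar-thin-Kac} in rank $n-2$. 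This identifies $ds_n(\mathcal E_{P(n)}(\lambda))$ explicitly; in particular, choosing the parabolic and $\lambda$ so that $\mathcal E_{P(n)}(\lambda)=\sch\nabla_{P(n)}(\lambda)$ (the full parabolic case), we get that $ds_n\bigl(\sch\nabla_{P(n)}(\lambda)\bigr)$ is, up to sign, $\sch\nabla_{P(n-2)}(\bar\lambda)$ whenever $\lambda$ has the right shape, and $0$ otherwise — this is also forced by Proposition~\ref{lemma:kernel-DS-Pn}, which tells us the kernel is exactly the span of the $\sch\nabla$'s that die.

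Next I would use Corollary~\ref{lemma:translation-functor-image}: once a class lies in $\Im ds_n$, all its translates $[\Theta_k(\cdot)]$ do too. Since $\Im ds_n$ is visibly an ideal (it contains $1=ds_n(1)$ and is closed under multiplication because $ds_n$ is a ring homomorphism), and since by Proposition~\ref{smallKac} the translation functors move black balls around in weight diagrams, I would argue that starting from a single well-chosen thin Kac module of $P(n-2)$ lying in the image and repeatedly applying $\Theta_k$'s, one reaches every $\sch\nabla_{P(n-2)}(\mu)$, $\mu\in\Lambda_{n-2}$ — i.e. one performs an induction on the weight diagram, peeling off or shifting balls, exactly as in the translation-functor combinatorics of \cite{BDEHHILNSS}. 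The short exact sequence in Proposition~\ref{smallKac}(3) is what lets one pass between $\nabla(\mu')$ and $\nabla(\mu'')$ inside the Grothendieck ring. Combined with the fact that $\{\sch\nabla_{P(n-2)}(\mu):\mu\in\Lambda_{n-2}\}$ spans $J(P(n-2))$ (triangularity: $\nabla(\mu)=L(\mu)+\text{lower terms}$), this yields surjectivity.

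The main obstacle I anticipate is the combinatorial bookkeeping in the induction step: showing that the translation functors $\Theta_k$, applied to the images of parabolic Euler characteristics, actually generate \emph{all} thin Kac supercharacters of $P(n-2)$ and not merely a proper sublattice — one must track which weight diagrams are reachable, handle the parity signs from the $\Pi$'s in Proposition~\ref{smallKac}, and make sure the Euler characteristics $\mathcal E_{P(n)}(\lambda)$ (which need not be honest characters of modules, only virtual) provide enough "seeds." A secondary subtlety is that $ds_n$ raises/lowers parity in a way dictated by $p(\lambda)$, so the sign conventions in $\sch$ must be reconciled with the $(-1)^{p(\lambda)}$ in Lemma~\ref{lem:superchar-thin-Kac} and with the isomorphism $J(P(n))\cong$ reduced Grothendieck ring; I would isolate these signs in a preliminary lemma so the main argument stays clean.
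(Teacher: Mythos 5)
Your outline reproduces the paper's first two steps (parabolic Euler characteristics as seeds, translation functors to propagate), but it breaks down at two points, one of them fatal. First, the seeds: by Proposition~\ref{lemma:kernel-DS-Pn}, \emph{every} thin Kac supercharacter of $P(n)$ lies in $\ker ds_n$, since the factor $1-x_{n-1}x_n$ of $\mathcal{R}_{-1}$ vanishes under the evaluation $x_{n-1}=x_n^{-1}$. So $ds_n\bigl(\sch\nabla_{P(n)}(\lambda)\bigr)=0$ for all $\lambda$, not merely for $\lambda$ of the wrong shape; the ``full parabolic'' case yields nothing. The paper instead takes a \emph{proper} parabolic with Levi $\mathfrak p(2k)+\mathfrak{gl}(n-2k)$ and checks by a direct computation with denominator identities that $ds_n^{(k)}\bigl(\mathcal E(a(\varepsilon_1+\cdots+\varepsilon_{2k}))\bigr)=\sch\nabla(0)$ (Proposition~\ref{nabla(0) is in the image}); the translation-functor propagation to all $\nabla(\mu)$ is then Proposition~\ref{prop:all-thin-Kac-image}, essentially as you describe.

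Second, and this is the genuine gap: the claim that $\{\sch\nabla_{P(n-2)}(\mu):\mu\in\Lambda_{n-2}\}$ spans $J(P(n-2))$ is false. By Proposition~\ref{lemma:kernel-DS-Pn} applied in rank $n-2$, their span is exactly $\ker ds_{n-2}$, i.e.\ the set of $W$-invariant Laurent polynomials divisible by $\prod_{i<j}(1-x_ix_j)$; in particular it does not contain $1=\sch L(0)$. Your triangularity argument does not rescue this: inverting $[\nabla(\mu)]=\pm[L(\mu)]+\text{lower terms}$ requires infinite descending recursions, and only finite linear combinations exist in the Grothendieck group. (Also, containing $1$ and being closed under products makes $\Im ds_n$ a subring, not an ideal; if it were an ideal containing $1$ there would be nothing to prove.) Thus your argument only shows $\ker ds_{n-2}\subseteq \Im ds_n$, which is strictly weaker than surjectivity. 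This missing step is precisely where the paper does the real work (Proposition~\ref{prop:surjectivity}): it filters $J(P(n))$ by the kernels of the iterated maps $ds_n^{(k)}$, passes to the associated graded, and shows by induction on $n$ that each induced map $\overline{ds}_n^{(k)}$, injective by construction, is also surjective, using that a suitable composite down to rank $0$ or $1$ hits the generator $1$ (respectively $x_1$). You would need this filtration argument, or a substitute for it, to close the proof.
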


We prove Theorem~\ref{thm:ds-map-surj} in three steps. 
We first show that if $[\nabla(0)]$ is in the image of $ds_n^{(k)}$ for some $k\geq 0$ (see \eqref{eqn:dsnk} for the definition), then $[\nabla(\mu)]$ is also in the image of $ds_n^{(k)}$ for every $\mu$. 
We then show that $[\nabla(0)]$ is in the image of $ds_n$ by explicitly constructing its preimage.  
Finally, we show that the fact that $[\nabla(\mu)]$ is in the image of $ds_n^{(k)}$ for every $\mu$ implies that the map is surjective.

\subsection{Thin Kac modules are in the image of $ds_n$}
 We prove the following proposition using the action of translations functors $\Theta_k$ on thin Kac modules. 
 
\begin{proposition}
\label{prop:all-thin-Kac-image}
If $[\nabla(0)]\in \Im ds_n^{(k)}$ for some $k \geq 0$, then $[\nabla(\mu)]\in \Im ds_n^{(k)}$ for all $\mu\in \Lambda_{n}$. 
\end{proposition}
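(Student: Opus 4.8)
I would prove this by induction on the dominance order, using the translation functors $\Theta_k$ to move between thin Kac modules, together with Corollary~\ref{lemma:translation-functor-image}, which guarantees that $\Im ds_n^{(k)}$ is stable under applying any $\Theta_i$ (the corollary is stated for $ds_n$, but the same argument iterates to $ds_n^{(k)}$). The key point is that Proposition~\ref{smallKac} describes $\Theta'_k\nabla(\lambda)$ completely in terms of weight diagrams: in cases \eqref{item:right-trans-fnr} and \eqref{item:left-trans-fnr} it is a single thin Kac module (up to parity), and in case \eqref{item:SES-trans-fnr} it is an extension of $\Pi\nabla(\mu')$ by $\nabla(\mu'')$, so at the level of supercharacters $[\Theta_k\nabla(\lambda)] = \pm([\nabla(\mu'')] - [\nabla(\mu')])$ or $\pm[\nabla(\mu')]$ depending on the case. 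Thus each application of a translation functor lets us trade one thin Kac class for a $\mathbb{Z}$-combination of thin Kac classes whose diagrams differ by moving a single black ball one step.

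**Key steps.** First I would fix the combinatorial picture: the diagram $d_0$ has black balls exactly at positions $0,1,\dots,n-1$, and $d_\mu$ for $\mu \in \Lambda_n$ is obtained by sliding these $n$ black balls (keeping them at distinct integer positions, left-shifting corresponds to increasing $\mu$ in dominance). Starting from $[\nabla(0)] \in \Im ds_n^{(k)}$, I would show by induction on (say) the total displacement $\sum_i(\overline{0}_i - \overline{\mu}_i)$, or equivalently by descending along a saturated chain in the dominance order from $0$ down (and up) to $\mu$, that $[\nabla(\mu)]$ lies in $\Im ds_n^{(k)}$. The inductive step: given that $[\nabla(\lambda)] \in \Im ds_n^{(k)}$ and $d_\mu$ is obtained from $d_\lambda$ by moving a single black ball one position into an adjacent empty slot, choose $k$ to be that slot's index (or its neighbor) so that one of the three nontrivial cases of Proposition~\ref{smallKac} applies at $d_\lambda$; then $[\Theta_k\nabla(\lambda)] \in \Im ds_n^{(k)}$ by Corollary~\ref{lemma:translation-functor-image}, and this class equals $[\nabla(\mu)]$ up to sign in cases \eqref{item:right-trans-fnr}--\eqref{item:left-trans-fnr}, or equals $\pm([\nabla(\mu)] - [\nabla(\mu'')])$ in case \eqref{item:SES-trans-fnr} where $d_{\mu''}$ is "closer" to $d_\lambda$ and hence already handled by the induction hypothesis; either way $[\nabla(\mu)] \in \Im ds_n^{(k)}$ since $\Im ds_n^{(k)}$ is a subgroup. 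Since $J(P(n-2k))$ acts on itself and $ds_n^{(k)}$ is a ring homomorphism, $\Im ds_n^{(k)}$ is in fact an ideal, but being a subgroup already suffices here.

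**Main obstacle.** The delicate part is the bookkeeping that guarantees we can always reach any target diagram $d_\mu$ from $d_0$ through a sequence of single-ball moves each realized by one of the three productive cases of Proposition~\ref{smallKac} — i.e. that the "isolated black ball" configuration $\circ\bullet\circ$ (case \eqref{item:SES-trans-fnr}, the only one producing a genuine correction term) does not obstruct connectivity, and that the correction term $[\nabla(\mu'')]$ produced there is always strictly earlier in whatever well-founded order drives the induction. I would handle this by choosing the order carefully: order thin Kac classes by, say, the position of the leftmost black ball, then lexicographically, and check that in case \eqref{item:SES-trans-fnr} both $\mu'$ and $\mu''$ move a ball \emph{outward} from position $k$, so both are comparable to $\lambda$ in a way compatible with the induction — one as the new target, the other as an already-known class. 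A clean alternative is to move the balls one at a time from left to right: first bring the leftmost ball of $d_0$ to its target position in $d_\mu$ using a chain of moves into empty cells (these are all type \eqref{item:right-trans-fnr}/\eqref{item:left-trans-fnr} since the cell moved into is empty and the cell vacated becomes empty), then the next ball, and so on; keeping a left-to-right discipline ensures the relevant diagram patterns are always of the single-module type, eliminating the correction term entirely. Verifying that this greedy procedure never gets stuck is the one genuinely combinatorial lemma the proof rests on.
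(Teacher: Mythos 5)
Your overall strategy --- generate thin Kac classes from $[\nabla(0)]$ by translation functors, use Corollary~\ref{lemma:translation-functor-image} to stay inside $\Im ds_n^{(k)}$, and absorb the extension term of Proposition~\ref{smallKac}\eqref{item:SES-trans-fnr} by an induction in which that term is already known --- is indeed the paper's strategy: the paper pushes one ball at a time monotonically to the left, so that when the isolated-ball case \eqref{item:SES-trans-fnr} occurs the correction $[\nabla(\mu'')]$ is exactly the diagram produced one step earlier, and it finishes by tensoring with powers of the supertrace character $L\left(k_1\sum_{i}\varepsilon_i\right)$ (whose class is in the image, the image of the ring homomorphism $ds_n^{(k)}$ being a subring) to reach all remaining dominant weights.

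However, your proposal has a genuine gap, and the one concrete device you offer to close it is incorrect. The ``clean alternative'' asserts that a left-to-right greedy procedure uses only cases \eqref{item:right-trans-fnr}--\eqref{item:left-trans-fnr} ``since the cell moved into is empty and the cell vacated becomes empty.'' That misreads Proposition~\ref{smallKac}: the single-module cases require a black ball adjacent to the vacated cell on the trailing side (local patterns $\bullet\bullet\circ$ or $\circ\bullet\bullet$), not merely an empty source and target. Hence as soon as a ball has been separated from its packed neighbors --- which happens after its very first step (e.g.\ moving the ball of $d_0$ from position $-1$ to $-2$ after the initial move $0\mapsto -1$, or any move at all when $n=1$) --- it is isolated, and every subsequent move is the extension case \eqref{item:SES-trans-fnr}. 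So the correction terms cannot be ``eliminated entirely''; any correct argument must organize them, and that is exactly the bookkeeping you defer (``the one genuinely combinatorial lemma the proof rests on''), which is the actual content of the proposition. What is needed is a well-founded schedule in which, each time case \eqref{item:SES-trans-fnr} is invoked, the unwanted term has already been produced; the paper's monotone one-ball-at-a-time schedule does this, with the correction always equal to the previous diagram. You also give no mechanism for the overall shift (dominant weights with $\mu_1>0$, i.e.\ balls to the right of $n-1$): this could in principle also be reached by translation functors with a symmetric rightward schedule, but you neither specify nor verify one, whereas the paper sidesteps it cleanly via the supertrace twist $\nabla(\mu)\otimes L\left(k_1\sum_i\varepsilon_i\right)=\nabla\left(\mu+k_1\sum_i\varepsilon_i\right)$.
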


\begin{proof}[Proof of Proposition~\ref{prop:all-thin-Kac-image}]
Assume that $[\nabla(0)]\in \Im ds_n^{(k)}$ for some $k\geq 0$. 
Consider $\Theta_0(\nabla(0))=\nabla(-\varepsilon_n)$. 
By Corollary~\ref{lemma:translation-functor-image}, $[\nabla(-\varepsilon_n)]$ is also in the image. 
Now, apply $\Theta_{-1}$ to $\nabla(-\varepsilon_n)$. Then 
$[\Theta_{-1}(\nabla(-\varepsilon_n))] 
= -[\nabla(-2\varepsilon_n)] + [\nabla(0)]$ by Proposition~\ref{smallKac} \eqref{item:SES-trans-fnr}.
Since $[\Theta_{-1}(\nabla(-\varepsilon_n))] -[\nabla(0)]$ is in the image, 
$-[\nabla(-2\varepsilon_n)]$ is also in the image. 
We can inductively apply $\Theta_{-k_n}$, 
where $k_n\geq 1$, 
to $\nabla(-k_n\varepsilon_n)$ to obtain 
\[ 
[\Theta_{-k_n}(\nabla(- k_n \varepsilon_n))]
=- [\nabla(-(k_n + 1)\varepsilon_n)] + [\nabla(-(k_n - 1)\varepsilon_n)], 
\]
 Thus $[\nabla(-k_n\varepsilon_n)]$ is in the image for every $k_n\geq 1$.

Now assume that $\nabla\left( -\sum_{i=r}^n k_i \varepsilon_i \right)$ 
is in the image for all $k_n \geq \ldots \geq  k_r\ge n-r$, $r\ge 2$.
In a similar way, 
we apply  $\Theta_{n-r+1}$ 
to $\nabla\left( -\sum_{i=r}^n k_i \varepsilon_i \right)$ 
to obtain that 
$\nabla\left( -\varepsilon_{r-1}-\sum_{i=r}^n k_i \varepsilon_i \right)$  
is also in the image. 
Now suppose that 
$\nabla\left( -j\varepsilon_{r-1}-\sum_{i=r}^n k_i \varepsilon_i \right)$ is in the image for some $1\le j < k_r$.  
We apply 
$\Theta_{n-r+1-j}$ to 
$\nabla\left( -j\varepsilon_{r-1}-\sum_{i=r}^n k_i \varepsilon_i \right)$ 
to obtain that 
$\nabla\left( -(j+1)\varepsilon_{r-1}-\sum_{i=r}^n k_i \varepsilon_i \right)$ 
is also in the image.

    Thus, we obtain that $\nabla\left(-\sum_{i=2}^n k_i\varepsilon_i  \right)$ is in the image for all $k_n\ge \ldots \ge k_2\ge 1$. Finally, we can obtain all other thin Kac modules via tensoring with powers of the supertrace representation. Indeed, $L\left(k_1\sum_{i=1}^n \varepsilon_i \right)$ is in the image because $\sch L\left(k_1\sum_{i=1}^n \varepsilon_i \right)=\left(x_1\cdots x_n\right)^{k_1}$  and 
    $ds_n\left( [L\left(k_1\sum_{i=1}^{n+2} \varepsilon_i \right)] \right) 
    = [L\left(k_1\sum_{i=1}^n \varepsilon_i \right)]$. Since 
    \[
    \nabla(\mu)\otimes L\left(k_1\sum_{i=1}^{n} \varepsilon_i \right) =\nabla\left(\mu+k_1\sum_{i=1}^{n} \varepsilon_i  \right),
    \]
 the assertion follows. 
\end{proof}

We now show that $[\nabla(0)]$ is in fact in the image of $ds_{n}^{(k)}$. We construct the preimage using parbolic induction.

\begin{proposition}\label{nabla(0) is in the image}
One has $[\nabla(0)]\in \Im ds_{n}^{(k)}$. 
\end{proposition}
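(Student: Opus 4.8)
The plan is to write down an explicit preimage of $[\nabla(0)]$ under $ds_n^{(k)}$ as a single Euler characteristic. Put $m:=n-2k$. By Lemma~\ref{lem:superchar-thin-Kac} applied at $\lambda=0$ together with the Weyl denominator identity $\sum_{w\in W}(-1)^{\ell(w)}e^{w\rho}=e^{\rho}\mathcal{R}_0$, one has $\sch\nabla_{P(m)}(0)=\mathcal{R}_{-1}=\prod_{1\le i<j\le m}(1-x_ix_j)$, so it suffices to produce $F\in J(P(n))$ with $ds_n^{(k)}(F)=\prod_{1\le i<j\le m}(1-x_ix_j)$. Recall also that, by $S_n$-invariance of $J(P(n))$, the map $ds_n^{(k)}$ can be realized as the evaluation $x_{2l-1}=t_l$, $x_{2l}=t_l^{-1}$ $(l=1,\dots,k)$ followed by the relabelling $x_{2k+1},\dots,x_n\mapsto x_1,\dots,x_m$.

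First I would fix the standard parabolic $\mathfrak{q}\subseteq\mathfrak{p}(n)$ attached to $\gamma=(0,\dots,0,-1,\dots,-1)$ ($2k$ zeros, then $m$ entries $-1$): its nilradical lies in the Borel $\mathfrak{b}_0\oplus\mathfrak{g}_{-1}$, its Levi is $\mathfrak{k}\cong\mathfrak{p}(2k)\oplus\mathfrak{gl}(m)$ with the $\mathfrak{p}(2k)$-factor supported on the first $2k$ coordinates, and the odd roots of the nilradical are $\Delta_1(\mathfrak{r})=\{-(\varepsilon_i+\varepsilon_j):i<j,\ j>2k\}$, so that $\prod_{\alpha\in\Delta_1(\mathfrak{r})}(1-e^{-\alpha})=\prod_{i<j,\,j>2k}(1-x_ix_j)$. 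The point of this choice is twofold: the square-zero elements $x_{\beta_l}$ with $\beta_l=-(\varepsilon_{2l-1}+\varepsilon_{2l})$ used to define $ds_n^{(k)}$ lie inside the $\mathfrak{p}(2k)$-factor of the Levi, where their sum has maximal rank, and the "diagonal" roots $-(\varepsilon_{2l-1}+\varepsilon_{2l})$ do \emph{not} occur in $\Delta_1(\mathfrak{r})$. I would then take $F:=\mathcal{E}^{P(n)}_{\mathfrak{q}}(0)\in J(P(n))$, given by the formula of \cite{MR2734963}.

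The computation of $ds_n^{(k)}(F)$ is then a collapse of the Weyl sum. Writing $\mathcal{E}^{P(n)}_{\mathfrak{q}}(0)=\tfrac{1}{e^\rho\mathcal{R}_0}\sum_{w\in S_n}(-1)^{\ell(w)}w\bigl(e^\rho\prod_{i<j,\,j>2k}(1-x_ix_j)\bigr)$ and substituting $x_{2l-1}=t_l$, $x_{2l}=t_l^{-1}$, a term indexed by $w$ acquires a vanishing factor unless $\{w(i),w(j)\}\neq\{2l-1,2l\}$ for every $l$ and every pair $i<j$ with $j>2k$; this forces $w$ to preserve the block decomposition $\{1,\dots,2k\}\sqcup\{2k+1,\dots,n\}$, i.e.\ $w\in S_{2k}\times S_m$. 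Since $\prod_{i<j,\,j>2k}(1-x_ix_j)$ is $S_{2k}\times S_m$-invariant — it factors as $C\cdot S$ with $C=\prod_{a\le 2k<b}(1-x_ax_b)$ and $S=\prod_{2k<a<b}(1-x_ax_b)$, each factor invariant — the surviving part of the sum factors as $C\,S\cdot\bigl(\sum_{w_1\in S_{2k}}(-1)^{\ell(w_1)}w_1(e^{\rho^{(1)}})\bigr)\bigl(\sum_{w_2\in S_m}(-1)^{\ell(w_2)}w_2(e^{\rho^{(2)}})\bigr)$, and dividing by the correspondingly factored Weyl denominator $e^\rho\mathcal{R}_0$ leaves exactly $S\cdot\prod_{a\le 2k<b}\tfrac{x_a(1-x_ax_b)}{x_a-x_b}$. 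Under the specialization this last product is identically $1$, because for each $l$ and each $b>2k$ one has the elementary identity $\tfrac{(1-t_lx_b)(1-t_l^{-1}x_b)}{(t_l-x_b)(t_l^{-1}-x_b)}=1$. Hence $ds_n^{(k)}(F)=S=\prod_{2k<a<b\le n}(1-x_ax_b)$, which after relabelling is $\prod_{1\le a<b\le m}(1-x_ax_b)=\sch\nabla_{P(m)}(0)$; thus $[\nabla(0)]\in\Im ds_n^{(k)}$.

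Conceptually this is the statement that $DS_x$ commutes with parabolic induction: since $x$ lies in the $\mathfrak{p}(2k)$-factor of the Levi at maximal rank, $DS_x(\mathfrak{k})=\mathfrak{gl}(m)$ and $\mathfrak{q}_x$ is the parabolic of $\mathfrak{p}(m)$ with Levi $\mathfrak{g}_0$ and nilradical $\mathfrak{g}_{-1}$, whose Euler characteristic $\mathcal{E}^{P(m)}_{\mathfrak{g}_0\oplus\mathfrak{g}_{-1}}(0)$ is precisely $\sch\nabla_{P(m)}(0)$. The only real work is the bookkeeping in the Weyl-sum collapse above — which permutations survive the diagonal specialization, and how the $S_n$-antisymmetrization refactors over $S_{2k}\times S_m$ — and the one nontrivial input is the cancellation $\tfrac{(1-tx)(1-t^{-1}x)}{(t-x)(t^{-1}-x)}=1$, which is exactly what kills the dependence on the variables $t_l$; I expect that bookkeeping to be the main, though essentially routine, obstacle.
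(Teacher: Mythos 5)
Your proposal is correct and follows essentially the same route as the paper: the same parabolic $\mathfrak{q}$ with $\gamma=-\sum_{l>2k}\varepsilon_l$, the same Euler characteristic $\mathcal{E}(\lambda)$ from \cite{MR2734963} (the paper takes $\lambda=a(\varepsilon_1+\cdots+\varepsilon_{2k})$, which includes your choice $a=0$), and the same collapse of the Weyl sum to $w\in S_{2k}\times S_{n-2k}$ under the specialization. The only differences are cosmetic: you work with the antisymmetrized Vandermonde form and the identity $\tfrac{(1-tx)(1-t^{-1}x)}{(t-x)(t^{-1}-x)}=1$, where the paper symmetrizes over $W$, observes that the factor $A$ specializes to $1$, and invokes the $\mathfrak{sl}$ denominator identities.
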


\begin{proof} 
Choose the parabolic subalgebra $\mathfrak{q}$ associated with $\gamma=-\sum_{l=2k+1}^n\varepsilon_l$ (see Section \ref{parabolic induction}). Then we have
$\mathfrak{k}\simeq \mathfrak{p}(2k)+\mathfrak{gl}(n-2k)$ and  
\begin{align*}
\Delta_0(\mathfrak{r}) &= \{ \varepsilon_i -\varepsilon_j: 1\leq i\leq 2k < j\leq n \}, \\ 
\Delta_1(\mathfrak{r}) &= \{ -\varepsilon_i-\varepsilon_j: i< j, \:\: 1\leq i\leq n, \:\: 2k < j\leq n \}. 
\end{align*}
We set $\lambda = a(\varepsilon_1+ \ldots + \varepsilon_{2k})$ for $a\in \mathbb Z$ 
and define $\widetilde{ds}_n^{(k)}$ 
as an evaluation at $x_i = t_i$, $x_{i+k}=t_i^{-1}$ for $i=1,\ldots, k$  
(namely the same as $ds_n^{(k)}$ up to a permutation). 
So 
\[ 
\mathcal{E}(\lambda) = 
\sum_{w\in S_n} w\left(
e^{\lambda}
\frac{\prod_{1\leq i\leq n, 2k < j \leq n, i< j} (1-x_i x_j)}{
\prod_{1\leq i<j\leq n} (1-x_i^{-1}x_j)
}
\right). 
\] 
Note that if $w\not\in S_{2k}\times S_{n-2k}$, then there exists $i\leq k$ such that $w^{-1}(\{ i,i+k\})\not\in \{ 1,\ldots, 2k\}$. 
This implies that for any $w\not\in S_{2k}\times S_{n-2k}$, we have 
\[
\widetilde{ds}_n^{(k)} \left(w\left(\prod_{1\leq i\leq n,2k<j\leq n} (1-x_ix_j)\right)\right)=0. 
\] 
Thus, we have 
\begin{equation}
\label{eqn:E-lambda-sch-xi}
\widetilde{ds}_n^{(k)} (\mathcal{E}(\lambda)) = 
\sum_{w\in S_{2k}\times S_{n-2k}} 
\widetilde{ds}_n^{(k)} 
\left(w\left(
e^{\lambda} 
\frac{\prod_{1\leq i\leq n,2k<j\leq n,i<j} (1-x_ix_j)}{\prod_{1\leq i<j\leq n} (1-x_i^{-1}x_j)}
\right)\right). 
\end{equation}
Now, we notice that $\lambda$ is $S_{2k}\times S_{n-2k}$-invariant and 
$\widetilde{ds}_n^{(k)}(e^{\lambda})=1$. 
Moreover, set
\[ 
A := \frac{\prod_{1\leq i\leq 2k < j\leq n} (1-x_ix_j)}{
\prod_{1\leq i\leq 2k < j\leq n} (1-x_i^{-1}x_j)  
},
\] 
then $A$ is also $S_{2k}\times S_{n-2k}$-invariant, and 
$\widetilde{ds}_n^{(k)} (A)=1$. 
We can further simplify \eqref{eqn:E-lambda-sch-xi} as 
\[ 
\widetilde{ds}_n^{(k)}(\mathcal{E}(\lambda)) 
= \sum_{w\in S_{2k}\times S_{n-2k}} 
\widetilde{ds}_n^{(k)}\left(w\left(
\frac{\prod_{2k < i < j \leq n} (1-x_ix_j)}{
\prod_{1\leq i<j\leq 2k} (1-x_i^{-1}x_j) 
\prod_{2k < i < j\leq n} (1-x_i^{-1}x_j)}
\right)\right). 
\] 
Finally, the latter expression can be rewritten as 
\[
\widetilde{ds}_n^{(k)} \left( 
\left( 
\sum_{u\in S_{2k}} u\left(
\frac{1}{\prod_{1\leq i<j\leq 2k}(1-x_i^{-1}x_j)} 
\right) 
\right) 
\left( 
\sum_{v\in S_{n-2k}} 
v\left(\frac{\prod_{2k < i < j \leq n}(1-x_ix_j)}{\prod_{2k< i < j\leq n}(1-x_i^{-1}x_j)} 
\right)
\right)  
\right).  
\] 
By the denominator identity of ${\mathfrak{sl}(2k)}$ and 
${\mathfrak{sl}(n-2k)}$, we have that 
\[ 
\sum_{u\in S_{2k}} u
\left(\frac{1}{\prod_{1\leq i < j\leq 2k}(1-x_i^{-1}x_j)}\right) 
= \sum_{v\in S_{n-2k}} 
v\left(\frac{1}{\prod_{2k < i < j\leq n}(1-x_i^{-1}x_j)}\right) = 1. 
\] 
So we finally get 
\[
\widetilde{ds}_n^{(k)} (\mathcal{E}(\lambda)) = \sum_{v\in S_{n-2k}} 
v\left( 
\frac{\prod_{2k < i < j\leq n}(1-x_i x_j) }{\prod_{2k < i < j \leq n}(1-x_i^{-1}x_j)} 
\right) 
= \prod_{2k < i < j\leq n} (1-x_ix_j), 
\] 
and $ds_n^{(k)}  (\mathcal{E}(\lambda)) =\prod_{1\leq i<j\leq 2k-i}(1-x_ix_j)=\sch \nabla(0)$, 
as desired. 
\end{proof}

\subsection{Surjectivity of the $ds_n$ map}

The following proposition concludes the proof of  Theorem~\ref{thm:ds-map-surj}.

 \begin{proposition}
 \label{prop:surjectivity}
 If $\Span\{ [\nabla(\lambda)]: \lambda\in \Lambda_{n-2} \} \subseteq \Im ds_{n}$, then $J(P(n-2))\subseteq \Im ds_{n}$. 
 \end{proposition}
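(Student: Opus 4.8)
The plan is to show that every $[\nabla(\lambda)]$ for $\lambda\in\Lambda_{n-2}$ being in the image forces $J(P(n-2))\subseteq \Im ds_n$, i.e.\ that the thin Kac modules already generate $J(P(n-2))$ over $\mathbb Z$. The idea is that the supercharacters $\sch\nabla(\lambda)$ form a $\mathbb Z$-basis (or at least a $\mathbb Z$-spanning set) of the supercharacter ring $J(P(n-2))$, because any finite-dimensional module admits a filtration — or at least a resolution in the Grothendieck group — by thin Kac modules. Concretely, since $\Im ds_n$ is a subgroup of $J(P(n-2))$ and $J(P(n-2))$ is identified with its image under $\sch$ inside $\Span_{\mathbb Z}\{e^\mu\}$, it suffices to exhibit, for each dominant $\lambda$, an expression of $[L(\lambda)]$ (equivalently of every class in the Grothendieck group) as a $\mathbb Z$-linear combination of the $[\nabla(\mu)]$.

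First I would recall the standard unitriangularity: by the formula in Lemma~\ref{lem:superchar-thin-Kac}, $\sch\nabla(\lambda)=(-1)^{p(\lambda)}\frac{\mathcal R_{-1}}{e^\rho\mathcal R_0}\sum_{w\in W}(-1)^{\ell(w)}e^{w(\lambda+\rho)}$, so $\{\sch\nabla(\lambda):\lambda\in\Lambda_{n-2}\}$ is obtained from the basis $\{e^{\lambda}\text{-type Schur functions}\}$ by multiplication by the fixed element $\mathcal R_{-1}$; hence they are linearly independent, and $[\nabla(\mu)]=\sum_{\lambda\ge\mu}c_{\mu\lambda}[L(\lambda)]$ with $c_{\mu\mu}=\pm 1$ and the sum finite (the composition factors of a thin Kac module have highest weights $\ge\mu$ in the dominance order on weight diagrams recalled in Section~2.5). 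Inverting this finite unitriangular (with respect to the partial order, truncated to a finite down-set) integer matrix expresses each $[L(\lambda)]$ as a $\mathbb Z$-combination of $[\nabla(\mu)]$'s. Therefore $[L(\lambda)]\in\Span_{\mathbb Z}\{[\nabla(\mu)]\}\subseteq \Im ds_n$ for all $\lambda\in\Lambda_{n-2}$. Since the classes $[L(\lambda)]$, $\lambda\in\Lambda_{n-2}$, span $J(P(n-2))$ as an abelian group (the simple modules, up to parity shift, exhaust the simple objects of $\mathcal F_{n-2}$, and in the reduced Grothendieck group $J$ the parity shifts are identified up to sign), we conclude $J(P(n-2))\subseteq\Im ds_n$, and combined with the trivial inclusion $\Im ds_n\subseteq J(P(n-2))$ this gives equality, proving Theorem~\ref{thm:ds-map-surj}.

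The one subtlety I would be careful about is finiteness and well-ordering: the dominance order on the full weight lattice of $\mathfrak g_0$ is not well-founded, so one cannot literally invert an infinite unitriangular matrix. The fix is that for a fixed $\lambda$ only finitely many $\mu\le\lambda$ occur among the composition factors of any single $\nabla(\mu')$ with $\mu'$ in a bounded region, and more importantly the relation $[\nabla(\mu)]=\sum_{\lambda}c_{\mu\lambda}[L(\lambda)]$ has, for each $\mu$, only finitely many nonzero terms, all with $\lambda\ge\mu$ (equivalently $\lambda$ obtained from $\mu$ by moving black balls in the weight diagram to the right within the support of $\mathcal R_{-1}$). So one argues by downward induction within a finite saturated set: given $\lambda$, let $S=\{\mu\in\Lambda_{n-2}:\mu\ge\lambda,\ \mu\text{ a composition-factor weight of some }\nabla(\nu),\ \nu\le\lambda\}$ — a finite set — and invert the resulting finite unitriangular matrix over $\mathbb Z$. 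The main obstacle is thus purely bookkeeping: making precise that the change-of-basis between $\{[L(\lambda)]\}$ and $\{[\nabla(\mu)]\}$ is genuinely finite and integer-unitriangular, for which I would simply cite the description of $\sch\nabla(\lambda)$ above together with the weight-diagram partial order, exactly as in the analogous arguments of \cite{BDEHHILNSS} and \cite{hoyt2016grothendieck}.
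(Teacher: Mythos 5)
Your argument hinges on the claim that the classes $[\nabla(\mu)]$, $\mu\in\Lambda_{n-2}$, span $J(P(n-2))$ over $\mathbb Z$, and that claim is false. By Lemma~\ref{lem:superchar-thin-Kac}, $\sch\nabla(\mu)=\pm\,\mathcal{R}_{-1}\,s_{\mu}$ with $s_\mu$ a Laurent Schur polynomial, so the $\mathbb Z$-span of the thin Kac classes is exactly $\mathcal{R}_{-1}\cdot\mathbb Z[x_1^{\pm1},\ldots,x_{n-2}^{\pm1}]^{S_{n-2}}$, which by Proposition~\ref{lemma:kernel-DS-Pn} is precisely $\ker ds_{n-2}$ --- a \emph{proper} subgroup of $J(P(n-2))$ once $n-2\geq 2$. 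Concretely, $[L(0)]=1$ is not divisible by $\prod_{i<j}(1-x_ix_j)$, hence is not a finite $\mathbb Z$-combination of thin Kac classes. (If the thin Kac classes did span, the proposition would be a tautology and none of the machinery of Section~\ref{section:main-thm-proof} would be needed.) The unitriangularity you invoke goes the wrong way: indeed $[\nabla(\mu)]=[L(\mu)]+\sum_\nu m_\nu[L(\nu)]$ with finitely many terms and all extra $\nu$ obtained from $\mu$ by adding sums of odd roots $\varepsilon_i+\varepsilon_j$, but inverting this recursion to express $[L(\lambda)]$ requires $[\nabla(\mu)]$ for weights $\mu$ that keep growing without bound, so the ``inverse'' is an infinite formal sum, not an identity in the Grothendieck group. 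Your proposed fix via a finite saturated set does not repair this: for fixed $\lambda$ the set of $\mu$ occurring in the formal expansion of $[L(\lambda)]$ in thin Kac classes is infinite (this is the same phenomenon as for atypical weights of $\mathfrak{gl}(m|n)$, where simples are not finite $\mathbb Z$-combinations of Kac modules).

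The paper's proof is of a genuinely different nature: it filters $J(P(n))$ by the subgroups $\ker ds_n^{(k)}$, passes to the associated graded $\Gr J(P(n))=\bigoplus_k \overline{J}_n^k$, notes that the induced maps $\overline{ds}_n^{(k)}\colon\overline{J}_n^k\to\overline{J}_{n-2}^{k-1}$ are automatically injective, and then proves their surjectivity by induction on $n$: the hypothesis that the thin Kac classes lie in $\Im ds_n$ handles exactly the bottom graded piece $\overline{J}_{n-2}^1=\ker ds_{n-2}$ (this is the only thing the hypothesis can give, consistent with the computation above), while the higher pieces are treated by composing down the chain to $J(P(0))$ or $J(P(1))$ and hitting explicit elements such as $1$ and $x_1\cdots x_n$, together with the inductive bijectivity of the intermediate maps. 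Some version of this passage through the filtration (or another device that accounts for elements of $J(P(n-2))$ outside the span of thin Kac classes) is indispensable; an inversion of the decomposition matrix alone cannot prove the proposition.
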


Consider 
\[ 
J\left(P(n)\right)\overset{ds_{n}}{\longrightarrow} 
J(P(n-2))\overset{ds_{n-2}}{\longrightarrow} 
J(P(n-4))\overset{ds_{n-4}}{\longrightarrow} 
\ldots 
\overset{ds_{x}}{\longrightarrow} 
J\left(P\left(n-2\left\lfloor \tfrac{n}{2} \right\rfloor\right)\right), 
\] 
where $x=n-2\left\lfloor \tfrac{n}{2} \right\rfloor+2$. 
Then  
there is a filtration 
\begin{equation}\label{eqn:filtration-kernels}
0 = \ker ds_n^{(0)} \subseteq \ker ds_n^{(1)} \subseteq \ker ds_n^{(2)} \subseteq \ldots \subseteq \ker ds_n^{\left( \lfloor \frac{n}{2}\rfloor\right) } = J(P(n)),  
\end{equation} 
 with  
 \[ 
J(P(n)) = \bigcup_{k=0}^{\lfloor \frac{n}{2} \rfloor} \ker ds_n^{(k)}. 
\]  
We write the associated graded of $J(P(n))$ 
with respect to this filtration, namely, 
\[ 
\Gr J(P(n)) = \bigoplus_{k=1}^{\lfloor \frac{n}{2} \rfloor} \overline{J}_n^{k}, 
\]
where $\overline{J}_n^{k}:= \ker ds_n^{(k)}/\ker ds_n^{(k-1)}$.
Let $\overline{ds}_n:  \Gr J(P(n)) \rightarrow \Gr J(P(n-2))$   and  
\[  
\overline{ds}_n^{(k)}: 
\overline{J}_n^k \hookrightarrow 
\overline{J}_{n-2}^{k-1}
\] 
be the corresponding maps. 
Note that $\overline{ds}_n^{(1)}$ is the zero map. 

\begin{remark}
\label{remark:filtration-HPS}
 The filtration~\eqref{eqn:filtration-kernels} is also used in \cite[Prop. 42]{HPS}. 
\end{remark}

We now prove Proposition~\ref{prop:surjectivity}. 
 
 \begin{proof} 
 It is enough to prove the statement for the associated graded, namely,  
 if $\Span\{ [\nabla(\lambda)] \}_{\lambda\in \Lambda_{n-2}} \subseteq \Im \overline{ds}_{n}$, then $J(P(n-2))\subseteq \Im \overline{ds}_{n}$. 
 We will consider the cases when $n$ is even and odd separately.  
 
We have the following diagram of maps when $n$ is even 
 \[
\xymatrix@-2pc{ 
\stackrel{\Gr J(P(n))}{} & \stackrel{\Gr J(P(n-2))}{} & \ldots & 
\stackrel{\Gr J(P(6))}{} & \stackrel{\Gr J(P(4))}{}  & 
\stackrel{\Gr J(P(2))}{} & \stackrel{\Gr J(P(0))}{}  \\ 
\overset{\overline{J}_n^{(n/2)+1}}{\bullet} \ar[dr]^{\overline{ds}_n^{((n/2)+1)}}  & & & & & & \\ 
\overset{\overline{J}_n^{n/2}}{\bullet} \ar[dr]^{\overline{ds}_n^{(n/2)}} &  \overset{\overline{J}_{n-2}^{n/2}}{\bullet}  &   & & & & \\ 
\vdots &  \overset{\overline{J}_{n-2}^{(n/2)-1}}{\bullet}  & \ddots & & & & \\ 
\overset{\overline{J}_{n}^k}{\bullet} \ar[dr]^{\overline{ds}_{n}^{(k)}} &  \vdots & \ddots & \overset{\:\:\:\:\:\overline{J}_6^4\:\:\:\:\:}{\bullet} \ar[dr]^{\overline{ds}_6^{(4)}} & & & \\ 
\vdots &  \overset{\overline{J}_{n-2}^{k-1}}{\bullet} & \ddots & \overset{\overline{J}_6^3}{\bullet}  
\ar[dr]^{\overline{ds}_6^{(3)}} & \overset{\:\:\:\:\:\overline{J}_4^3\:\:\:\:\:}{\bullet} \ar[dr]^{\overline{ds}_4^{(3)}} & &  \\ 
\overset{\overline{J}_{n}^2}{\bullet} 
\ar[dr]^{\overline{ds}_n^{(2)}} & \vdots  & \ddots & \overset{\overline{J}_6^2}{\bullet} \ar[dr]^{\overline{ds}_6^{(2)}} &
\overset{\overline{J}_4^2}{\bullet} 
\ar[dr]^{\overline{ds}_4^{(2)}} & \overset{\:\:\:\:\:\overline{J}_2^2\:\:\:\:\:}{\bullet} \ar[dr]^{\overline{ds}_2^{(2)}} & \\ 
\overset{\overline{J}_{n}^1}{\bullet} & \overset{\overline{J}_{n-2}^1}{\bullet} & & \overset{\overline{J}_6^1}{\bullet}  &\overset{\overline{J}_4^1}{\bullet} &\overset{\overline{J}_2^1}{\bullet}  & \overset{\:\:\:\:\:\overline{J}_0^1,\:\:\:\:\:}{\bullet} \\ 
}
\] 
and we have a similar diagram when $n$ is odd, with $\Gr J(P(n))=\bigoplus_{k=1}^{(n+1)/2} \overline{J}_n^k$.

We show that all the arrows in the diagram are bijective.
Since $\overline{J}_n^{k}= \ker ds_n^{(k)}/\ker ds_n^{(k-1)}$, 
the map 
$\overline{ds}_n^{(k)}$ is injective for every $k$ and $n$. 
It remains to show that $\overline{ds}_n^{(k)}$ is surjective.  
We prove it by induction on $n$, separately for even and odd $n$.

For $n=2$ , note first that $J(P(0))=\mathbb{Z}$. 
Since   
the map $\overline{ds}_{2}^{}: J(P(2))/\ker ds_2\rightarrow J(P(0))$  sends the supercharacter of the trivial representation to the supercharacter of the trivial representation, i.e., $\overline{ds}_{2}^{} (1)= 1$, we have that $\overline{ds}_{2}^{}$ is surjective, and that is an isomorphism of vector spaces. 
For $n=3$, note that $J(P(1))=\mathbb{Z}[x_1^{\pm 1}]$. 
Since $\overline{ds}_{3}^{}: \overline{J}_3^{2}\rightarrow \overline{J}_1^1$ maps 
$\overline{ds}_{3}^{}(x_1 x_2 x_3)=x_1$, it is an isomorphism.

Now suppose that we proved the statement up to $n-2$. We will prove it for $n$. 
Namely, we show that the maps in the leftmost column in the above diagrams are surjective. The lowest map in the diagram $\overline{ds}_{n}^{(2)}: \overline{J}_{n}^2 \hookrightarrow 
\overline{J}_{n-2}^{1}$ is surjective by Theorem~\ref{thm:ds-map-surj}.   
Consider 
$\overline{ds}_{n}^{(k)}: \overline{J}_{n}^k \hookrightarrow 
\overline{J}_{n-2}^{k-1}$, where 
$2< k\leq \lfloor \frac{n}{2} \rfloor+1$.

First, consider the map $g$ in the diagram below 
\[ 
\xymatrix@-1pc{
\overline{J}_{n}^{k+1} \ar[rrr]^{
\overline{ds}_{n}^{(k+1)}} \ar@/_8mm/[rrrrrrrr]_g & & & 
\overline{J}_{n-2}^k 
\ar[rrrrr]^{
\overline{ds}_{2}^{(2)} \circ \cdots \circ\overline{ds}_{n-2}^{(k)}} & & & & & \overline{J}_{n-2k}^1, 
}
\] 
where $g= \overline{ds}_{2}^{(2)} \circ \cdots \circ\overline{ds}_{n-2}^{(k)}\circ\overline{ds}_{n}^{(k+1)}$. 
The map $g$ is surjective since
when $n$ is even,  $g(1)=1$, 
and when $n$ is odd, $g(x_1\cdots x_n)=x_1$.  
By induction hypothesis, 
$\overline{ds}_{2}^{(2)} \circ \cdots \circ\overline{ds}_{n-2}^{(k)}$ is  bijective. 
Thus, $\overline{ds}_{n}^{(k+1)}$ is a surjection. 
\end{proof}

\section{Grothendieck rings of periplectic Lie superalgebras}
\label{section:Groth-ring-superalgebra}

\subsection{Proof of Theorem~\ref{thm:reduced-GR-periplectic}}
\label{subsection:main-result-proof}
We will now prove the main theorem. 

\begin{proof} 
By Lemma~\ref{lemma:Pn-superchar-containment}, 
we have that  $J(P(n))\subseteq  J_n$.   
Let us show the reverse inclusion. Suppose by induction that   $J(P(n-2))=J_{n-2}$.

By Theorem~\ref{thm:ds-map-surj}, 
the evaluation map 
\[ 
ds_n: J_n  
\rightarrow J(P(n-2))
\]  
given by $ds_n(f)=f|_{x_{n-1}=x_n^{-1}=t}$ is surjective when restricted to $J(P(n))$. 
Thus, every element of $J_n$  
is a sum of elements from $J(P(n))$ and $\ker {ds}_n$. 
By Proposition~\ref{lemma:kernel-DS-Pn},  $\ker {ds}_n\subseteq J(P(n))$ and   the claim follows. 
\end{proof}

\subsection{The Grothendieck ring of the Lie superalgebra $\mathfrak{p}(n)$}
\label{subsubsection:extend-results-to-pn}
The description of the ring of supercharacters of finite-dimensional representations over the Lie superalgebra $\mathfrak p(n)$ can be deduced from the
description of the ring corresponding to the Lie supergroup $P(n)$.
\begin{proposition}
\label{prop:Pn-vs-pn} 
Denote by $S$ the additive group of $\mathbb C$ and by $T$ its subgroup $\mathbb Z$. 
We have 
\[ 
J(\mathfrak{p}(n))\simeq \mathbb Z[\mathbb C]\otimes_{\mathbb Z[T]} J_n. 
\]  
\end{proposition}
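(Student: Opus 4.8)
The plan is to identify $J(\mathfrak p(n))$ with the ring obtained from $J(P(n))=J_n$ (Theorem~\ref{thm:reduced-GR-periplectic}) by adjoining all complex powers of the supertrace character, and to recognize the latter as the displayed base change. Set $\mathbf 1:=\varepsilon_1+\cdots+\varepsilon_n$, and for $c\in\mathbb C$ let $\mathbb C_c$ denote the one-dimensional $\mathfrak p(n)$-module on which $\mathfrak h$ acts through the weight $c\mathbf 1$; thus $\mathbb C_c\cong\mathbb C_{c'}$ forces $c=c'$, $\mathbb C_c\otimes\mathbb C_{c'}\cong\mathbb C_{c+c'}$, and for $k\in\mathbb Z$ the module $\mathbb C_k$ is the restriction to $\mathfrak p(n)$ of the one-dimensional $P(n)$-module $L_{P(n)}(k\mathbf 1)$, whose supercharacter is $(x_1\cdots x_n)^k$. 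We make $\mathbb Z[\mathbb C]=\mathbb Z[S]$ a $\mathbb Z[\mathbb Z]=\mathbb Z[T]$-algebra via the inclusion of group rings, and $J_n$ a $\mathbb Z[T]$-algebra via $\mathbb Z[\mathbb Z]=\mathbb Z[t^{\pm1}]\to J_n$, $t\mapsto x_1\cdots x_n$; this is well defined since $(x_1\cdots x_n)^{\pm1}\in J_n$ (the evaluation $e_n|_{x_i=x_j^{-1}=t}$ is visibly independent of $t$). With these structures $\mathbb Z[\mathbb C]\otimes_{\mathbb Z[T]}J_n$ is a commutative ring, and the goal is to produce a ring isomorphism onto $J(\mathfrak p(n))$.

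First I would construct the map. The restriction functor $\mathcal F_n\to\mathcal C_n$ is monoidal and exact, hence induces a ring homomorphism $\iota\colon J(P(n))=J_n\to J(\mathfrak p(n))$, and $c\mapsto[\mathbb C_c]$ is a homomorphism from $(\mathbb C,+)$ to the group of units of $J(\mathfrak p(n))$ (as $[\mathbb C_c][\mathbb C_{-c}]=[\mathbb C_0]=1$). Together they give a map $\mathbb Z[\mathbb C]\otimes_{\mathbb Z}J_n\to J(\mathfrak p(n))$, $[c]\otimes f\mapsto[\mathbb C_c]\,\iota(f)$; since $[\mathbb C_k]=\iota\big((x_1\cdots x_n)^k\big)$ for $k\in\mathbb Z$, it factors through a ring homomorphism $\Psi\colon\mathbb Z[\mathbb C]\otimes_{\mathbb Z[T]}J_n\to J(\mathfrak p(n))$.

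For surjectivity I would invoke the classification of the finite-dimensional simple $\mathfrak p(n)$-modules (see, e.g., \cite{S1}): the highest weight $\nu$ of such a module is $\mathfrak g_0$-dominant with $\nu_i-\nu_{i+1}\in\mathbb Z_{\ge0}$, so it is uniquely of the form $\nu=\lambda+c\mathbf 1$ with $\lambda\in\Lambda_n$ and $c\in\mathbb C$ well defined modulo $\mathbb Z$, and the module itself is $L_{\mathfrak p(n)}(\lambda)\otimes\mathbb C_c$, where $L_{\mathfrak p(n)}(\lambda)$ is the restriction of $L_{P(n)}(\lambda)$. Hence $[L_{\mathfrak p(n)}(\nu)]=\Psi\big([c]\otimes\sch L_{P(n)}(\lambda)\big)$; since the classes of simples span $J(\mathfrak p(n))$, $\Psi$ is onto. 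For injectivity, fix a set $R\subseteq\mathbb C$ of representatives of $\mathbb C/\mathbb Z$; then $\mathbb Z[\mathbb C]$ is free over $\mathbb Z[T]$ on $\{[c]:c\in R\}$, so every element of $\mathbb Z[\mathbb C]\otimes_{\mathbb Z[T]}J_n$ is uniquely $\sum_{c\in R}[c]\otimes f_c$ with $f_c\in J_n$ all but finitely many zero. Applying the injective supercharacter map $\sch\colon J(\mathfrak p(n))\hookrightarrow\Span_{\mathbb Z}\{e^\nu:\nu\in\mathfrak h^*\}$ (injective because nonisomorphic simples have distinct characters, exactly as for $P(n)$), one gets $\sch\,\Psi\big(\sum_c[c]\otimes f_c\big)=\sum_{c\in R}(x_1\cdots x_n)^c f_c$; the $c$-th summand is supported on the coset $c\mathbf 1+\mathbb Z^n$ of $\mathfrak h^*$ (as $f_c\in J_n\subseteq\mathbb Z[x_1^{\pm1},\ldots,x_n^{\pm1}]$ is supported on $\mathbb Z^n$), and these cosets are pairwise distinct for distinct $c\in R$ because $(c-c')\mathbf 1\notin\mathbb Z^n$ whenever $c-c'\notin\mathbb Z$. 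So the sum vanishes only if each $(x_1\cdots x_n)^c f_c=0$, i.e.\ each $f_c=0$; hence $\Psi$ is injective, and the proof is complete.

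The step I expect to be the real obstacle is the input used for surjectivity, namely the classification of finite-dimensional simple $\mathfrak p(n)$-modules as the $L_{\mathfrak p(n)}(\lambda)\otimes\mathbb C_c$ — equivalently, the identification of $\mathcal F_n$ inside $\mathcal C_n$ as the full subcategory of integral-weight modules; everything else is the freeness/disjoint-support bookkeeping above together with pinning down the normalization of $\mathbb Z[T]\to J_n$ (the precise weight $\mathbf 1$ of the supertrace power), which is exactly what makes the $\mathbb Z[T]$-actions match on the two sides.
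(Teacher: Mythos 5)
Your proof is correct and follows essentially the same route as the paper's: the paper's (much terser) argument also rests on the fact that every finite-dimensional simple $\mathfrak p(n)$-module is an integral simple (i.e.\ one from $\mathcal F_n$) twisted by a complex power of the supertrace character, with this presentation unique exactly up to the relation $(x_1\cdots x_n)\otimes 1=1\otimes(x_1\cdots x_n)$, which is your freeness/coset-support argument for injectivity. Your write-up simply makes explicit the construction of the map and the bookkeeping that the paper leaves to the reader.
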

\begin{proof} The character of any simple module can be written in the form  $(x_1\cdots x_n)^a \sch L$, where  $(x_1\cdots x_n)^a$  is a complex power of the
  character of the supertrace representation and $L\in\mathcal F_n$. Such presentation is unique up to the relation $(x_1\cdots x_n)\otimes 1=1\otimes (x_1\cdots x_n)$.
  The statement follows.
  \end{proof}

\subsection{The Grothendieck ring of $SP(n)$ and $\mathfrak{sp}(n)$}
\label{subsubsection:extend-results-to-spn} 

Let $\mathfrak{sp}(n)$ be the special periplectic Lie superalgebra defined as 
\[ 
\mathfrak{sp}(n) = 
\left\{ 
\begin{pmatrix}
A & B \\ 
C & -A^t \\ 
\end{pmatrix} \in \mathfrak{p}(n) : 
\tr(A)=0
\right\},   
\] 
and $SP(n):=P(n)\cap SL(n|n)$ denote the corresponding Lie supergroup. Note that $SP(n)$ has two connected components $\det A=1$ and $\det A=-1$.
The following statement is straightforward.

\begin{proposition}
  The ring $J(SP(n))$ is isomorphic to the quotient ring $J_n/((x_1\cdots x_n-1)(x_1\cdots x_n+1))$ and the ring $J(\mathfrak{sp}(n))$ is isomorphic to
$J_n/(x_1\cdots x_n-1))$.  
\end{proposition}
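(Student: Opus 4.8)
The statement relates the supercharacter rings of $SP(n)$ and $\mathfrak{sp}(n)$ to the ring $J_n$ by quotienting out the difference between the trivial representation and powers of the supertrace representation. Since $SP(n) = P(n)\cap SL(n|n)$ and $\mathfrak{sp}(n)\subset\mathfrak p(n)$ are obtained from $P(n)$ and $\mathfrak p(n)$ by killing the supertrace, the natural approach is to trace through what happens to supercharacters under restriction from $P(n)$ to $SP(n)$. The key observation is that restricting a $P(n)$-module to $SP(n)$ identifies the one-dimensional supertrace representation $\mathfrak{str}$ with the trivial representation precisely when we pass to the relevant quotient: for $SP(n)$, the character of the supertrace representation $(x_1\cdots x_n)$ becomes a square root of the trivial character in the sense that $SP(n)$ has two components on which $\det A = \pm 1$, so $(x_1\cdots x_n)^2 = 1$ upon restriction, while for $\mathfrak{sp}(n)$ the supertrace is identically zero and $(x_1\cdots x_n)=1$ upon restriction.

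First I would set up the restriction homomorphisms $J(P(n))\to J(SP(n))$ and $J(P(n))\to J(\mathfrak{sp}(n))$, and argue each is surjective: every finite-dimensional $SP(n)$- or $\mathfrak{sp}(n)$-module is (after twisting) a restriction of a $P(n)$-module, using that simple modules over these subalgebras/subgroups are obtained from simples of the bigger objects by restriction, together with the two-component structure of $SP(n)$. Next I would identify the kernels: for $\mathfrak{sp}(n)$, a $P(n)$-module and its twist by $\mathfrak{str}$ restrict isomorphically, so $(x_1\cdots x_n - 1)$ lies in the kernel; conversely, by the description of $J(\mathfrak p(n))$ in Proposition~\ref{prop:Pn-vs-pn} as $\mathbb Z[\mathbb C]\otimes_{\mathbb Z[T]} J_n$, restricting to $\mathfrak{sp}(n)$ sets the complex power $a=0$, which is exactly imposing $x_1\cdots x_n = 1$ on $J_n$. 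For $SP(n)$, twisting by $\mathfrak{str}^2$ gives an isomorphic module upon restriction (since $\det A = \pm1$ squares to $1$), so $(x_1\cdots x_n - 1)(x_1\cdots x_n+1) = (x_1\cdots x_n)^2 - 1$ lies in the kernel, and one checks nothing more is killed by comparing dimensions of weight spaces over the two components.

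I would then conclude in each case by the first isomorphism theorem: $J(SP(n))\cong J_n/((x_1\cdots x_n)^2 - 1)$ and $J(\mathfrak{sp}(n))\cong J_n/(x_1\cdots x_n - 1)$, having already established $J(P(n))\cong J_n$ by Theorem~\ref{thm:reduced-GR-periplectic}. The supersymmetry condition defining $J_n$ passes to the quotients automatically since it is preserved under the ring homomorphism $J_n\to J_n/I$.

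The main obstacle I anticipate is showing the kernel is no larger than the stated ideal — i.e., that the only relations among supercharacters introduced by restriction are those coming from twisting by powers of the supertrace. This amounts to showing the restriction map is "as injective as possible," which requires knowing that distinct simple $SP(n)$- (resp. $\mathfrak{sp}(n)$-) modules, up to supertrace twist, have distinct supercharacters. For $SP(n)$ this is slightly delicate because of the two connected components: a simple $P(n)$-module may split or stay irreducible on restriction, and one must verify that the induced map on supercharacter rings has kernel exactly $((x_1\cdots x_n)^2-1)$ and not something smaller or larger. This is why the proposition is labeled "straightforward" rather than given a full proof — the structure is clear, but the bookkeeping with components and twists is the only real content.
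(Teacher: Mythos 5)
The paper itself gives no argument here (it labels the statement straightforward, noting only the two components of $SP(n)$), and your restriction-based strategy is surely the intended one: restrict from $P(n)$, observe that the Berezinian, with character $(x_1\cdots x_n)^2$, is trivial on $SP(n)\subseteq SL(n|n)$ while $x_1\cdots x_n$ restricts to the order-two sign character distinguishing the components $\det A=\pm 1$, and that both restrict trivially for $\mathfrak{sp}(n)$. The surjectivity half is unproblematic: every dominant weight of $\mathfrak{sl}(n)$, and every simple of $SP(n)$ up to the sign twist, lifts to an integral dominant weight, so by Theorem~\ref{thm:reduced-GR-periplectic} the restriction map out of $J_n=J(P(n))$ is onto; in particular, for $\mathfrak{sp}(n)$ you do not need Proposition~\ref{prop:Pn-vs-pn} or complex powers of the supertrace at all, since the even part is $\mathfrak{sl}(n)$ and integral weights suffice.

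The gap is in the kernel step, and it is not where you locate it. Supercharacters of $SP(n)$-modules (resp.\ $\mathfrak{sp}(n)$-modules) live in $\mathbb{Z}[x_1^{\pm1},\ldots,x_n^{\pm1}]/\bigl((x_1\cdots x_n)^2-1\bigr)$ (resp.\ modulo $x_1\cdots x_n-1$), so the kernel of restriction on $J_n$ is the \emph{contraction} $J_n\cap \bigl((x_1\cdots x_n)^2-1\bigr)\mathbb{Z}[x^{\pm1}]$, and what must be proved is that this contraction equals the ideal generated by $(x_1\cdots x_n)^2-1$ \emph{inside} $J_n$, i.e.\ that the cofactor $g$ in $f=\bigl((x_1\cdots x_n)^2-1\bigr)g$ again satisfies the supersymmetry condition. ``Comparing dimensions of weight spaces over the two components'' does not engage with this; it is a divisibility statement in $J_n$. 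For $n\ge 3$ it follows in one line: evaluating at $x_1=x_2^{-1}=t$ gives $f| = \bigl((x_3\cdots x_n)^2-1\bigr)\,g|$, and since the left side is $t$-independent and $(x_3\cdots x_n)^2-1$ is a nonzero element of the Laurent polynomial domain in the remaining variables, $g|$ is $t$-independent, hence $g\in J_n$ (same argument with $x_1\cdots x_n-1$ for $\mathfrak{sp}(n)$). For $n=2$ this argument fails outright: $\bigl((x_1x_2)^2-1\bigr)(x_1+x_2)$ lies in $J_2$ and in the contracted ideal, but its cofactor $x_1+x_2$ is not in $J_2$, so the contraction strictly contains the extended ideal and the low-rank case needs separate treatment (either a direct check of what actually dies in $J(SP(2))$, which also requires verifying that the supercharacter map stays injective for the disconnected group $SP(n)$, or a separate interpretation of the quotient). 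So: right route, but the only step with real content is the saturation of the principal ideal in $J_n$, not the component bookkeeping.
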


\subsection{A Weyl groupoid  for $\mathfrak{p}(n)$}
\label{subsection:super-Weyl-groupoid}

In this section, we describe the polynomial invariants of certain affine action of the super Weyl groupoid $\mathfrak{W}$ of $\mathfrak{p}(n)$. 
We follow the definition of the Weyl groupoid given in \cite[Section 9]{MR2776360} (see also  \cite{MR3712194}).

Let $\mathfrak{T}_{\odd}$ be a groupoid with base as the set 
$\{ \pm (\varepsilon_i+\varepsilon_j) :i< j \}
= \pm \Delta(\mathfrak{g}_{-1})$ 
of odd roots. 
The set of morphisms from $\alpha\rightarrow \beta$ is nonempty if and only if $\beta=\pm \alpha$. 
We denote $\tau_{\alpha}$ as the morphism sending $\alpha\mapsto -\alpha$, 
where $\alpha \in \Delta(\mathfrak{g}_{\bar 1})$. 
The group $W=S_n$ acts on $\mathfrak{T}_{\odd}$  by 
$\alpha \mapsto w(\alpha)$ and 
$\tau_{\alpha} \mapsto \tau_{w(\alpha)}$.   
The Weyl groupoid is defined as  
$$
\mathfrak{W}:= W \coprod  W \ltimes \mathfrak{T}_{\odd},
$$ 
where $W$ is considered as a groupoid with a single point base $[W]$ 
and the semi-direct product groupoid 
$W \ltimes \mathfrak{T}_{\odd}$ with the base 
$\Delta(\mathfrak{g}_{-1})$.

Now, define the following affine action $\pi$ of the super Weyl groupoid
$\mathfrak{W}$ on the affine space $V=\mathfrak{h}^*$, which is the dual space to a Cartan subalgebra $\mathfrak{h}$ of $\mathfrak{p}(n)$. 
The base point $[W]$ maps to the space $V$, 
while the base element corresponding to an odd root $\alpha = \varepsilon_i+\varepsilon_j$ 
maps to the hyperplane $\Pi_{\alpha}$ defined by the equation $(\varepsilon_i-\varepsilon_j,x)=0.$
The element $\tau_{\alpha}$ acts as a shift 
\[ 
\tau_{\alpha}(x) = x + \alpha, \qquad \mbox{ where } x \in \Pi_{\alpha}.
\] 
Note that $x + \alpha$ also belongs to $\Pi_{\alpha}$ for every $x\in\Pi_\alpha$. 
We identify $V\cong V^*$ using an invariant bilinear form 
and view the elements of $\mathbb{C}[\mathfrak {h}^*]$ as functions on $V$.
A function $f$ on $V$ is invariant under the action of the groupoid $\mathfrak{W}$
if for any $g \in \mathfrak{W}$, we have $f(g(x)) = f(x)$ for all $x$.

Let  ${P_0} \subseteq \mathfrak{h}^*$ be the  abelian group of the integral  weights of the Lie superalgebra $\mathfrak{p}(n)_0$. The description of $J(\mathfrak p(n))$ can be formulated as follows:

\begin{theorem}
The Grothendieck ring $J(\mathfrak{p}(n))$ of finite-dimensional representations of $\mathfrak{p}(n)$ is isomorphic to the ring $\left(\Span_\mathbb{Z}\{e^\lambda \mid \lambda\in P_0  \}\right)^{\mathfrak{W}}$ of invariants of the super Weyl groupoid $\mathfrak{W}$ under the action described above.  
\end{theorem}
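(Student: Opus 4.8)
The plan is to show that the ring $\left(\Span_{\mathbb Z}\{e^\lambda\mid\lambda\in P_0\}\right)^{\mathfrak W}$ coincides with the ring $J(\mathfrak p(n))\simeq \mathbb Z[\mathbb C]\otimes_{\mathbb Z[T]}J_n$ described in Proposition~\ref{prop:Pn-vs-pn}. First I would unwind what $\mathfrak W$-invariance means concretely. Invariance under the subgroupoid $W=S_n$ is just the usual symmetry under the Weyl group of $\mathfrak p(n)_0$. The new content is invariance under the pieces $\tau_\alpha$ attached to an odd root $\alpha=\varepsilon_i+\varepsilon_j$: a function $f$ on $V=\mathfrak h^*$ must satisfy $f(x+\alpha)=f(x)$ for every $x$ lying in the hyperplane $\Pi_\alpha=\{(\varepsilon_i-\varepsilon_j,x)=0\}$. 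In the variables $x_k=e^{\varepsilon_k}$ the hyperplane $\Pi_\alpha$ is exactly the locus $x_i=x_j$ — wait, I should be careful: $(\varepsilon_i-\varepsilon_j,x)=0$ means $x_i=x_j$ under the additive-to-multiplicative translation, but the DS evaluation in the body of the paper is at $x_i=x_j^{-1}$; the discrepancy is precisely the twist by the odd root $\alpha=\varepsilon_i+\varepsilon_j$, and reconciling these two descriptions is one bookkeeping step. Concretely, the $\tau_\alpha$-condition says: restricting $f$ to $\Pi_\alpha$ and then translating by $\alpha$ leaves $f|_{\Pi_\alpha}$ fixed, i.e. the restriction of $f$ to $\Pi_\alpha$ is invariant under the shift lattice generated by $\alpha$.

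The key translation to make is this: for $f\in\Span_{\mathbb Z}\{e^\lambda\}$, the $\tau_\alpha$-invariance condition is equivalent to saying that $f|_{x_i=x_j^{-1}=t}$ is independent of $t$, up to the global scaling by a power of $x_1\cdots x_n$ that is unavoidable over the Lie superalgebra (as opposed to the supergroup). I would verify this by a direct computation: parametrize $\Pi_\alpha$ (say by fixing $x_i x_j$ to a constant while the remaining $x_k$ are free, or whatever coordinates make $(\varepsilon_i-\varepsilon_j,\cdot)=0$ hold), observe that $\tau_\alpha$ shifts by $\varepsilon_i+\varepsilon_j$, i.e. multiplies both $x_i$ and $x_j$ by the same scalar $s$ while the product $x_ix_j$ gets multiplied by $s^2$ — so $\tau_\alpha$-invariance forces $f$ restricted to $\{x_i=x_j\}$ to be constant along the curve $x_i=x_j=s$ (all other coordinates fixed), which after the change of variable matching the $\varepsilon$-labeling is exactly the condition defining $J_n$ in Section~\ref{subsubsection:reduced-Gr-Pn}. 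The "independence of $t$" in $J_n$ is the supergroup statement; for the superalgebra one allows complex powers of $x_1\cdots x_n$, and those correspond exactly to the extra freedom that $\mathbb Z[\mathbb C]\otimes_{\mathbb Z[T]}(-)$ builds in, since $x_1\cdots x_n$ is $\mathfrak W$-invariant (it is $W$-invariant and the hyperplane shifts by $\alpha$ multiply it by $s^2$, which—hmm—actually does move it; so in fact the precise statement is that $\mathbb C$-powers of $x_1\cdots x_n$ are invariant only modulo the squaring issue, and the correct normalization is handled by passing to $P_0$ and to the quotient ring, which again is a bookkeeping matter). So the proof reduces to: $\left(\Span_{\mathbb Z}\{e^\lambda\mid\lambda\in P_0\}\right)^{\mathfrak W}$ is precisely $S_n$-symmetric elements satisfying the supersymmetry condition along each $\Pi_\alpha$, and this ring is $\mathbb Z[\mathbb C]\otimes_{\mathbb Z[T]}J_n$.

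With that dictionary in place, the proof is short: by Theorem~\ref{thm:reduced-GR-periplectic} combined with Proposition~\ref{prop:Pn-vs-pn}, $J(\mathfrak p(n))\simeq\mathbb Z[\mathbb C]\otimes_{\mathbb Z[T]}J_n$; and by the computation above, this is exactly the groupoid-invariant ring. I would organize it as: (1) describe the $\tau_\alpha$-orbits and conclude that $\mathfrak W$-invariance $=$ $S_n$-invariance plus a per-hyperplane condition; (2) identify that per-hyperplane condition with the supersymmetry condition defining $J_n$ (on the nose for $P(n)$, up to $\mathbb C$-powers of the Berezinian-type element $x_1\cdots x_n$ for $\mathfrak p(n)$); (3) invoke Theorem~\ref{thm:reduced-GR-periplectic} and Proposition~\ref{prop:Pn-vs-pn}. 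The main obstacle I anticipate is step (1)–(2): getting the signs and the direction of the shift exactly right so that $\tau_\alpha$ with $\alpha=\varepsilon_i+\varepsilon_j$ acting on the hyperplane $(\varepsilon_i-\varepsilon_j,\cdot)=0$ really does cut out the locus $x_i=x_j^{-1}$ and impose $t$-independence, rather than some cousin of it; and making sure the $P_0$ versus $\Lambda$ distinction and the complex-power subtlety from Proposition~\ref{prop:Pn-vs-pn} are tracked consistently so the two rings match as subrings of the same ambient space, not merely abstractly. The rest is routine once the denominator identities and $W$-(anti)invariance of $e^\rho\mathcal R_0$ and $\mathcal R_{-1}$ from Section~\ref{subsection:root-systems} are brought to bear.
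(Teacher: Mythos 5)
The paper itself states this theorem without a written proof: it is intended as a reformulation of Theorem~\ref{thm:reduced-GR-periplectic} combined with Proposition~\ref{prop:Pn-vs-pn}, once $\mathfrak{W}$-invariance is translated into $S_n$-invariance plus one condition per odd root $\alpha=\varepsilon_i+\varepsilon_j$. So your overall plan --- steps (1)--(3) --- is the right (and essentially the only) route, and step (3) is indeed just a citation of those two results.

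The genuine gap is that steps (1)--(2), which are the entire content of the theorem, are never actually carried out, and the sketch you give of the dictionary is wrong. If one identifies $V\cong V^*$ by the standard form $(\varepsilon_i,\varepsilon_j)=\delta_{ij}$ and views $e^\lambda$ as $x\mapsto e^{(\lambda,x)}$, then the hyperplane $(\varepsilon_i-\varepsilon_j,x)=0$ is the locus where the multiplicative variables satisfy $x_i=x_j$, and $\tau_\alpha$ rescales $x_i$ and $x_j$ by the same factor; the resulting invariance condition is ``$f|_{x_i=x_j=t}$ is independent of $t$'', which is exactly what your computation produces (``constant along the curve $x_i=x_j=s$''). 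But that is \emph{not} the condition defining $J_n$, and it cuts out a different subring: $x_1\cdots x_n=e^{\varepsilon_1+\cdots+\varepsilon_n}$ is the supercharacter of a one-dimensional module, hence lies in $J(\mathfrak p(n))$ and in $J_n$, yet it is not constant on $x_i=x_j=t$; conversely, for $n=2$ the element $x_1x_2^{-1}+x_2x_1^{-1}$ is constant on $x_1=x_2=t$ but does not lie in $J_2$. So the ``squaring issue'' you notice for $x_1\cdots x_n$ is not a normalization to be absorbed by passing to $P_0$ or to a quotient --- it is the symptom that the identification has been set up on the wrong locus. The dictionary one must prove is that the $\tau_\alpha$-condition corresponds to invariance along the locus on which the Duflo--Serganova evaluation lives, namely $\alpha(h)=0$, i.e.\ $x_ix_j=1$, with $\tau_\alpha$ generating the shift $t\mapsto et$ along $x_i=x_j^{-1}=t$ (equivalently, a shift in the direction $\varepsilon_i-\varepsilon_j$ on the hyperplane $(\varepsilon_i+\varepsilon_j,x)=0$); then linear independence of distinct characters on that hyperplane converts invariance into precisely the per-class coefficient conditions defining $J_n$, extended by complex powers of $x_1\cdots x_n$ as in Proposition~\ref{prop:Pn-vs-pn} (note $e^{d}\neq 1$ for $d\in\mathbb Z\setminus\{0\}$ is what makes a single shift suffice). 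This is exactly the ``main obstacle'' you flagged and then deferred as bookkeeping; as written, your argument establishes invariance for the wrong subring, so the identification must be pinned down and verified before Theorem~\ref{thm:reduced-GR-periplectic} and Proposition~\ref{prop:Pn-vs-pn} can be invoked.
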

\color{black}
\fi
% \bibliography{branching-diag}   

\newcommand{\etalchar}[1]{$^{#1}$}
\def\cprime{$'$} \def\cprime{$'$} \def\cprime{$'$} \def\cprime{$'$}
\providecommand{\bysame}{\leavevmode\hbox to3em{\hrulefill}\thinspace}
\providecommand{\MR}{\relax\ifhmode\unskip\space\fi MR }
% \MRhref is called by the amsart/book/proc definition of \MR.
\providecommand{\MRhref}[2]{%
  \href{http://www.ams.org/mathscinet-getitem?mr=#1}{#2}
}
\providecommand{\href}[2]{#2}

\end{document}